\newcommand{\N}{\mathbb{N}}
\newcommand{\sub}{\subseteq}
\def\epsilon{\varepsilon}
\newtheorem{theo}{Theorem}[section]
\newtheorem{lem}[theo]{Lemma}
\newtheorem{pro}[theo]{Proposition}
\newtheorem{cor}[theo]{Corollary}
\newtheorem{defi}[theo]{Definition}
\newtheorem{rem}[theo]{Remark}
\numberwithin{equation}{section}
\title{A Diestel-Faires type result for multimeasures}
\author{Jos\'{e} Rodr\'{i}guez}
\address{Dpto. de Matem\'{a}ticas\\E.T.S. de Ingenier\'{i}a Agron\'{o}mica y de Montes y Biotecnolog\'{i}a\\
Universidad de Castilla-La Mancha\\ 02071 Albacete\\ Spain} 
\email{jose.rodriguezruiz@uclm.es}
\subjclass[2020]{28B20, 46G10}
\keywords{Multimeasure; Orlicz-Thomas property; Diestel-Faires theorem; countably additive selector}
\thanks{The research was supported by grants PID2021-122126NB-C32 
(funded by MCIN/AEI/10.13039/501100011033 and ``ERDF A way of making Europe'', EU) and 
21955/PI/22 (funded by {\em Fundaci\'on S\'eneca - ACyT Regi\'{o}n de Murcia}).}
\begin{document}

\begin{abstract}
Let $X$ be a real Banach space and let $Y \sub X^*$ be a linear subspace having the Orlicz-Thomas property, that is, 
for each $\sigma$-algebra $\Sigma$ and for each map $\nu:\Sigma\to X$, the countable additivity
of the composition $x^*\circ \nu$ for all $x^*\in Y$ implies the countable additivity of~$\nu$. 
We show that the Orlicz-Thomas
property allows to test countable additivity of set-valued maps. Namely, if $M$ is a map defined on a $\sigma$-algebra~$\Sigma$ whose values are 
convex, $\sigma(X,Y)$-compact, bounded non-empty subsets of~$X$, then the following statements are equivalent:
\begin{enumerate}
\item[(i)] $M$ is a strong multimeasure, that is, for every disjoint sequence $(A_n)_{n}$ in~$\Sigma$ the series of sets
$\sum_n M(A_n)$ is unconditionally convergent and the equality $M(\bigcup_n A_n)=\sum_n M(A_n)$ holds.
\item[(ii)] $M$ is a multimeasure, that is, for every $x^*\in X^*$ the support map $s(x^*,M):\Sigma \to \mathbb{R}$ 
defined by $s(x^*,M)(A):=\sup \{x^*(x):x\in M(A)\}$ is countably additive.
\item[(iii)] $s(x^*,M)$ is countably additive for every $x^*\in Y$.
\end{enumerate}
As an application, we give a result on the factorization of multimeasures through reflexive Banach spaces.
\end{abstract}

\maketitle

\section{Introduction}

Let $X$ be a real Banach space. The classical Orlicz-Pettis theorem states that a map $\nu:\Sigma\to X$, defined on a 
$\sigma$-algebra~$\Sigma$, is a countably additive vector measure if (and only if)
the composition $x^*\circ \nu$ is a countably additive scalar measure for every $x^*\in X^*$ (the dual of~$X$); see, e.g., \cite[p.~22, Corollary~4]{die-uhl-J}. 
Diestel and Faires~\cite{die-fai} (cf. \cite[p.~23, Corollary~7]{die-uhl-J}) showed that,
when $X$ contains no subspace isomorphic to~$\ell_\infty$, in order to get
the countable addivity of~$\nu$ it is enough to test the countable additivity of $x^*\circ \nu$ for every $x^*$ belonging to a {\em total} subset
of~$X^*$ (that is, a subset which separates the points of~$X$). A set $W \sub X^*$ is said to have the
{\em Orlicz-Thomas property} if a map $\nu:\Sigma \to X$, defined on a $\sigma$-algebra~$\Sigma$, 
is countably additive whenever $x^*\circ \nu$ is countably additive for every $x^*\in W$. This concept goes
back to~\cite{tho} and has been recently studied in~\cite{oka-rod-san}. A set having the Orlicz-Thomas property is total,
while the converse holds when $X$ contains no subspace isomorphic to~$\ell_\infty$, thanks to
the aforementioned result of Diestel and Faires.
Without additional assumptions on~$X$, it is known that every James boundary of~$X$ has the Orlicz-Thomas property, see
\cite[Proposition~2.9]{fer-alt-4} (cf. \cite[Remark~3.2(i)]{nyg-rod}).

In this paper we address similar questions for \emph{set-valued} maps, that is, whose values are subsets of a Banach space. 
There are different concepts of countable additivity for set-valued maps (see, e.g., \cite{hes-J} and the references therein). To deal with them 
we first need to introduce some terminology.
Given a sequence $(C_n)_{n}$ of non-empty subsets of~$X$, the series $\sum_{n} C_n$ is said to be {\em unconditionally convergent}
if the series $\sum_{n} x_n$ is unconditionally convergent in~$X$ for every sequence $(x_n)_{n}$ such that $x_n\in C_n$ for all $n\in \N$;
in this case, we define the \emph{sum} of the series by
$$
	\sum_{n}C_n:=\left\{\sum_{n}x_n:\ x_n\in C_n \text{ for all $n\in \N$}\right\}.
$$
In the set-valued setting, the role of the composition with elements of~$X^*$ is played by the so-called {\em support maps}.
We denote
\begin{eqnarray*}
	b(X) &:=&\{C \sub X: \, C \text{ is bounded and non-empty}\}, \\
	cwk(X)&:=&\{C \sub X: \, C \text{ is convex, weakly compact and non-empty}\}.
\end{eqnarray*}
Given $C \in b(X)$ and $x^*\in X^*$, let
$$
	s(x^*,C):=\sup\{x^*(x): \, x\in C\}.
$$
Given a set-valued map $M:\Sigma \to b(X)$ defined on a $\sigma$-algebra~$\Sigma$ and $x^*\in X^*$, the \emph{support map}
$s(x^*,M): \Sigma \to \mathbb{R}$ is defined by 
$$
	s(x^*,M)(A):=s(x^*,M(A))
	\quad \text{for all $A\in \Sigma$}.
$$
Observe that we commit a rather usual abuse of notation here.

\begin{defi}\label{defi:multimeasure}
Let $X$ be a Banach space and let $M:\Sigma \to b(X)$ be a set-valued map defined on a $\sigma$-algebra~$\Sigma$.
We say that:
\begin{enumerate}
\item[(i)] $M$ is a {\em strong multimeasure} if for every disjoint sequence $(A_n)_{n}$ in~$\Sigma$ the series
$\sum_{n} M(A_n)$ is unconditionally convergent and 
$$
	\overline{M\left(\bigcup_{n} A_n\right)}=\overline{\sum_{n} M(A_n)}
$$
(the closures being taken in the norm topology).
\item[(ii)] $M$ is a {\em multimeasure} if $s(x^*,M)$ is countably additive for every $x^*\in X^*$.
\end{enumerate}
\end{defi}

It is not difficult to check that every strong multimeasure is a multimeasure. 
Conversely, if a multimeasure takes values in~$cwk(X)$, 
then $M$ is a strong multimeasure (cf. \cite[Theorem~3.4]{cas-kad-rod-2}). 
This can be seen as a set-valued version of the Orlicz-Pettis theorem and raises the question of whether
an analogue of the Diestel-Faires theorem holds in this context. Our main result states that this is
indeed the case. More precisely, in Theorem~\ref{theo:OT-2} we prove that a set-valued map 
$M:\Sigma \to b(X)$, defined on a $\sigma$-algebra~$\Sigma$, is a
strong multimeasure provided that there is a linear subspace $Y \sub X^*$ having the Orlicz-Thomas property such that:
(i)~$M(A)$ is convex and $\sigma(X,Y)$-compact for every $A\in \Sigma$; and (ii)~$s(x^*,M)$ is countably additive for every $x^*\in Y$.
We stress that Musia\l \ proved in \cite[Theorem~3.4]{mus13} the particular case of Theorem~\ref{theo:OT-2}
when $X=Y^*$ is the dual of a Banach space $Y$  and $X$ contains no subspace isomorphic to~$\ell_\infty$. 
From the technical point of view, our approach to Theorem~\ref{theo:OT-2} uses
some ideas of the proof of \cite[Theorem~3.4]{cas-kad-rod-2} and a selection result for multimeasures in 
locally convex spaces which can be obtained as a consequence of results from the 1970's by different authors (Cost\'{e}, Drewnowski,
Godet-Thobie, Pallu de la Barri\`{e}re), see \cite{hes-J} and the references therein. In the Appendix we provide an accesible proof of that result for the reader's convenience. 

The paper is organized as follows. In Section~\ref{section:preliminaries} we collect
some preliminary known facts. Section~\ref{section:main} is devoted to proving our main result, Theorem~\ref{theo:OT-2}. 
Finally, in Section~\ref{section:applications} we provide an application to the 
factorization of multimeasures. It is known that the Davis-Figiel-Johnson-Pe{\l}czy\'nski factorization procedure
can be used to prove that
every countably additive vector measure $\nu:\Sigma\to X$, defined on a $\sigma$-algebra~$\Sigma$, factors through a reflexive Banach space~$Z$, in the sense
that there exist a countably additive vector measure $\tilde{\nu}:\Sigma\to Z$ and a one-to-one, bounded, linear operator
$T:Z\to X$ such that $\nu=T\circ\tilde{\nu}$ (see \cite[Theorem~2.1(i)]{rod15}, cf. \cite[Theorem~4.1]{nyg-rod}). Here we show that an analogue statement holds
for $cwk(X)$-valued multimeasures (Theorem~\ref{theo:DFJP}).

\section{Notation and preliminaries}\label{section:preliminaries}

Throughout this paper we deal with real linear spaces only. Let $E$ be a linear space. The convex hull of a set $C \sub E$
is denoted by ${\rm conv}(C)$. Given finitely many non-empty subsets of~$E$, say
$C_1,\dots,C_p$, its {\em sum} is defined as 
$$
	\sum_{i=1}^p C_i:=\left\{\sum_{i=1}^p x_i: \, x_i \in C_i \, \text{ for every $i\in \{1,\dots,p\}$}\right\}.
$$

Let $X$ be a Banach space. The evaluation of $x^*\in X^*$ at $x\in X$ is denoted by either
$x^*(x)$ or $\langle x^*,x\rangle$. The closed unit ball of~$X$ is denoted by~$B_X$. Given a bounded non-empty set $C$, we write
$\|C\|:=\sup\{\|x\|:x\in C\}$, where $\|\cdot\|$ stands for the norm of~$X$. 

The following lemma is a well known characterization of unconditional convergence for series of sets (see, e.g., \cite[p.~544]{cas-rod-2} and the
references therein).

\begin{lem}\label{lem:ucsets-1}
Let $X$ be a Banach space and let $(C_n)_n$ be a sequence of non-empty subsets of~$X$. The following statements are equivalent:
\begin{enumerate}
\item[(i)] $\sum_n C_n$ is unconditionally convergent.
\item[(ii)] For every $\epsilon>0$
there is $N_\epsilon \in \N$ such that $\|\sum_{n\in P} C_n\| \leq \epsilon$
for every finite non-empty set $P \sub \N \setminus \{1,\dots,N_\epsilon\}$.
\end{enumerate}
If each $C_n$ is bounded and the series $\sum_n C_n$ is unconditionally convergent, then the set $\sum_n C_n$ is bounded and, for each $x^*\in X^*$, the series 
$\sum_n s(x^*,C_n)$ is absolutely convergent, with sum $s(x^*,\sum_n C_n)$.
\end{lem}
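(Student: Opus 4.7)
The overall strategy is to reduce every claim to the Cauchy criterion for unconditional convergence in the Banach space~$X$.

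For the equivalence, (ii)$\Rightarrow$(i) is immediate: given any selector $x_n\in C_n$ and any finite $P$, one has $\|\sum_{n\in P}x_n\|\leq\|\sum_{n\in P}C_n\|$, which is small on tails by (ii), so the Cauchy criterion yields unconditional convergence. For (i)$\Rightarrow$(ii), I argue by contrapositive: if (ii) fails, I recursively build pairwise disjoint finite blocks $P_1,P_2,\dots$ with $\min P_{k+1}>\max P_k$ and selectors $x_n\in C_n$ for $n\in P_k$ satisfying $\|\sum_{n\in P_k}x_n\|>\epsilon$ for some fixed $\epsilon>0$. Completing to a full selector $(x_n)$ (using non-emptiness of the remaining $C_n$) produces a series violating the Cauchy criterion, contradicting (i).

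The boundedness of $\sum_n C_n$ follows at once from (ii) with $\epsilon=1$: any $\sum_n x_n$ decomposes as $\sum_{n\leq N_1}x_n+\sum_{n>N_1}x_n$, the first bounded by $\sum_{n\leq N_1}\|C_n\|$ and the second a norm limit of partial sums each of norm at most~$1$. For the identity $s(x^*,\sum_n C_n)=\sum_n s(x^*,C_n)$, the inequality $\leq$ comes from $\sum_n x^*(x_n)\leq\sum_n s(x^*,C_n)$ for every selector (commuting sum with~$x^*$ by continuity of~$x^*$ and unconditional convergence in~$X$); the reverse inequality follows by picking selectors with $x^*(x_n)>s(x^*,C_n)-\delta/2^n$ for arbitrary $\delta>0$.

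The main obstacle is the absolute convergence of $\sum_n s(x^*,C_n)$, which is needed even to make sense of the sum identity. Given $\epsilon>0$, take $N_\epsilon$ from (ii) and split a finite tail set $P$ into $P^+=\{n\in P:s(x^*,C_n)\geq 0\}$ and $P^-=P\setminus P^+$. On $P^+$ pick $x_n\in C_n$ with $x^*(x_n)>s(x^*,C_n)-\epsilon/|P|$, and analogously on $P^-$. Then (ii) gives $\|\sum_{n\in P^\pm}x_n\|\leq\epsilon$, so $|\sum_{n\in P^\pm}x^*(x_n)|\leq\|x^*\|\epsilon$; since $x^*(x_n)$ differs from $s(x^*,C_n)$ by at most $\epsilon/|P|$, summing gives $|\sum_{n\in P^\pm}s(x^*,C_n)|\leq(\|x^*\|+1)\epsilon$, hence $\sum_{n\in P}|s(x^*,C_n)|\leq 2(\|x^*\|+1)\epsilon$. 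This is the Cauchy criterion for the real series $\sum_n|s(x^*,C_n)|$, which finishes the proof.
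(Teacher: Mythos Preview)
Your proof is correct. Note, however, that the paper does not actually prove this lemma: it records it as a ``well known characterization'' and simply refers the reader to \cite[p.~544]{cas-rod-2} and the references therein. Hence there is no argument in the paper to compare against. Your proof stands on its own: the contrapositive block construction for (i)$\Rightarrow$(ii), the direct Cauchy criterion for (ii)$\Rightarrow$(i), the tail estimate for boundedness of $\sum_n C_n$, and the sign-splitting $P=P^+\cup P^-$ to establish absolute convergence of $\sum_n s(x^*,C_n)$ are all valid, and together they give a complete, self-contained elementary proof. (One cosmetic remark: you invoke the inequality $\sum_n x^*(x_n)\leq\sum_n s(x^*,C_n)$ before you have shown the right-hand series converges, but since you explicitly flag and then resolve this as ``the main obstacle'', the logic is fine once the paragraphs are read in the right order.)
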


The sum of an unconditionally convergent series of weakly compact sets is
weakly compact, see \cite[Lemma~2.2]{cas-rod-2}. We next extend that result to $\sigma(X,Y)$-compact sets, where $X$ is a Banach space,
$Y\sub X^*$ is a total set and $\sigma(X,Y)$ denotes the (locally convex Hausdorff) topology on~$X$ of pointwise convergence on~$Y$.

\begin{lem}\label{lem:compactness}
Let $X$ be a Banach space and let $Y \sub X^*$ be a total set. Let $(C_n)_n$ be a sequence of $\sigma(X,Y)$-compact non-empty subsets of~$X$. If
the series $\sum_n C_n$ is unconditionally convergent, then its sum $\sum_n C_n$ is $\sigma(X,Y)$-compact.
\end{lem}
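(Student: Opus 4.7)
The plan is to exhibit $\sum_{n} C_n$ as the continuous image of a compact Hausdorff space under the natural summation map. I consider the product $K:=\prod_{n} C_n$, where each factor $C_n$ carries the $\sigma(X,Y)$-topology. Since $Y$ is total, $\sigma(X,Y)$ is Hausdorff, and Tychonoff's theorem yields that $K$ is compact Hausdorff. Define $\Phi:K \to X$ by $\Phi((x_n)_n):=\sum_{n} x_n$; the series converges in norm (hence in $\sigma(X,Y)$) by the assumed unconditional convergence of $\sum_{n} C_n$. The range of $\Phi$ is, by definition, exactly $\sum_{n} C_n$, so it suffices to prove that $\Phi$ is continuous when $X$ carries the topology $\sigma(X,Y)$.

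To verify this, I fix $y \in Y$ and show that the scalar map $\Phi_y:K \to \mathbb{R}$, $\Phi_y((x_n)_n):=\langle y,\Phi((x_n)_n)\rangle=\sum_{n} \langle y,x_n \rangle$, is continuous. The partial sum $S_N((x_n)_n):=\sum_{n=1}^{N} \langle y,x_n\rangle$ is a finite sum of compositions of coordinate projections with the continuous linear functional~$y$, and is therefore continuous on~$K$. By Lemma~\ref{lem:ucsets-1} applied to $y$ and to~$-y$ (both elements of~$X^*$), the series $\sum_{n} s(y,C_n)$ and $\sum_{n} s(-y,C_n)$ are absolutely convergent. Since $-s(-y,C_n)\le \langle y,x_n\rangle \le s(y,C_n)$ for every $(x_n)_n \in K$, the tails of $\Phi_y$ satisfy
$$
\Bigl|\Phi_y((x_n)_n) - S_N((x_n)_n)\Bigr| \leq \sum_{n>N}\bigl(|s(y,C_n)|+|s(-y,C_n)|\bigr),
$$
and the right-hand side tends to zero as $N\to\infty$ independently of $(x_n)_n$. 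Hence $S_N \to \Phi_y$ uniformly on~$K$, so $\Phi_y$ is continuous as a uniform limit of continuous functions.

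Since this holds for every $y \in Y$, the map $\Phi$ is continuous into $(X,\sigma(X,Y))$, and the image $\sum_{n} C_n = \Phi(K)$ is $\sigma(X,Y)$-compact, as required. The one technical point that needs care is the uniform convergence of the partial sums over~$K$; once the absolute convergence afforded by Lemma~\ref{lem:ucsets-1} is in hand, this is immediate, and the remainder of the argument is just the standard Tychonoff-plus-continuous-image maneuver.
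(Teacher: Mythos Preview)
Your proof is correct and takes the same approach as the paper: both exhibit $\sum_n C_n$ as the image of the Tychonoff product $\prod_n C_n$ under the summation map and verify $\sigma(X,Y)$-continuity of that map via a uniform tail estimate, the only cosmetic difference being that the paper argues with nets and the norm bound from Lemma~\ref{lem:ucsets-1}(ii) while you phrase it as uniform convergence of the partial sums $S_N\to\Phi_y$. One small caveat: your appeal to the last clause of Lemma~\ref{lem:ucsets-1} presupposes that each $C_n$ is norm-bounded, which $\sigma(X,Y)$-compactness alone does not guarantee for a general total $Y\sub X^*$; this is easily repaired, since Lemma~\ref{lem:ucsets-1}(ii) already gives $\bigl\|\sum_{n>N}x_n\bigr\|\le\epsilon$ for $N\ge N_\epsilon$, and hence $|\Phi_y-S_N|\le\|y\|\,\epsilon$ uniformly on~$K$, exactly as in the paper.
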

\begin{proof}
Observe that $K:=\prod_n C_n$ is a compact topological space when equipped with the product topology (denoted by~$\mathfrak{T}$) and 
each $C_n$ is equipped with the relative $\sigma(X,Y)$-topology. For each $n\in \N$, we denote by $\pi_n: K \to C_n$
the $n$th-coordinate projection. Since the series $\sum_n C_n$ is unconditionally convergent,
we can define a map $S: K \to X$ by $S(z):=\sum_n \pi_n(z)$ for all $z\in K$, so that $S(K)=\sum_n C_n$. 

We will prove that $S$ is $\mathfrak{T}$-$\sigma(X,Y)$-continuous. Take a net $(z_\alpha)_{\alpha}$ in~$K$ which $\mathfrak{T}$-converges to some 
$z \in K$. Fix $x^*\in Y$ and $\epsilon>0$.
By Lemma~\ref{lem:ucsets-1}, there is $N_\epsilon\in \N$ such that $\|\sum_{n\in P} C_n\|\leq \epsilon$ for every finite non-empty set $P\sub \N$
disjoint from $\{1,\dots,N_\epsilon\}$. This yields
\begin{eqnarray*}
	|\langle x^*,S(z_\alpha)-S(z)\rangle| 
	&\leq& \sum_{n=1}^{N_\epsilon} \left|\left\langle x^*, \pi_n(z_\alpha)-\pi_n(z)\right\rangle\right| \\
	& & +\left|\left\langle x^*,\sum_{n>N_\epsilon} \pi_n(z_\alpha)\right\rangle\right|
	+\left|\left\langle x^*,\sum_{n>N_\epsilon} \pi_n(z)\right\rangle\right|
	\\ &\leq&
	\sum_{n=1}^{N_\epsilon} \left|\left\langle x^*, \pi_n(z_\alpha)-\pi_n(z)\right\rangle\right|+2\|x^*\|\epsilon
\end{eqnarray*}
for every $\alpha$. Since for each $n\in \{1,\dots,N_\epsilon\}$ the net $(\pi_n(z_\alpha))_{\alpha}$ is $\sigma(X,Y)$-convergent to~$\pi_n(z)$,
there is $\alpha_\epsilon$ such that $|\langle x^*,S(z_\alpha)-S(z)\rangle| \leq (1+2\|x^*\|)\epsilon$ for every $\alpha \geq \alpha_\epsilon$.
As $x^*\in Y$ and $\epsilon>0$ are arbitrary, the net $(S(z_\alpha))_{\alpha}$ is $\sigma(X,Y)$-convergent to~$S(z)$. 

Hence, $S$ is $\mathfrak{T}$-$\sigma(X,Y)$-continuous and so the set $S(K)=\sum_n C_n$ is $\sigma(X,Y)$-compact.
\end{proof}

\begin{cor}\label{cor:strong-multimeasure-compact}
Let $X$ be a Banach space, let $Y \sub X^*$ be a total set and
let $M:\Sigma \to b(X)$ be a set-valued map defined on a $\sigma$-algebra~$\Sigma$ 
such that $M(A)$ is $\sigma(X,Y)$-compact for every $A\in \Sigma$. The following statements are equivalent:
\begin{enumerate}
\item[(i)] $M$ is a strong multimeasure.
\item[(ii)] For every disjoint sequence $(A_n)_n$ in~$\Sigma$ the series
$\sum_n M(A_n)$ is unconditionally convergent and 
$$
	M\left(\bigcup_n A_n\right)=\sum_n M(A_n).
$$
\end{enumerate}
\end{cor}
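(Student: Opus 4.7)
The plan is to show that the closures in Definition~\ref{defi:multimeasure}(i) are redundant under the $\sigma(X,Y)$-compactness hypothesis, so that (i) is literally the same statement as (ii). The key observation is that both $M(\bigcup_n A_n)$ and $\sum_n M(A_n)$ are automatically norm-closed subsets of~$X$.

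First I would record the following topological fact. Because $Y$ is total, the topology $\sigma(X,Y)$ is Hausdorff, so every $\sigma(X,Y)$-compact subset of~$X$ is $\sigma(X,Y)$-closed. Since $Y \sub X^*$, each element of~$Y$ is norm-continuous, hence the norm topology on~$X$ is finer than $\sigma(X,Y)$. Therefore every $\sigma(X,Y)$-closed set (in particular, every $\sigma(X,Y)$-compact set) is norm-closed. Applying this to each value of~$M$, we conclude that $M(A)$ is norm-closed for every $A\in \Sigma$.

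Next I would deal with the series sum. If $M$ is a strong multimeasure and $(A_n)_n$ is a disjoint sequence in~$\Sigma$, then $\sum_n M(A_n)$ is unconditionally convergent and each summand is $\sigma(X,Y)$-compact, so Lemma~\ref{lem:compactness} tells us that $\sum_n M(A_n)$ is itself $\sigma(X,Y)$-compact. By the previous paragraph, $\sum_n M(A_n)$ is then norm-closed.

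Finally, I would deduce the equivalence. The implication (ii)$\Rightarrow$(i) is immediate from the definition, since taking closures of equal sets preserves equality. For (i)$\Rightarrow$(ii), the definition gives $\overline{M(\bigcup_n A_n)} = \overline{\sum_n M(A_n)}$; by the two norm-closedness facts just established, both closures may be dropped, yielding $M(\bigcup_n A_n) = \sum_n M(A_n)$. There is no real obstacle here: the argument is purely a matter of combining the definition with Lemma~\ref{lem:compactness} and the elementary comparison of the topologies $\sigma(X,Y)$ and $\|\cdot\|$.
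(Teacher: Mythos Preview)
Your proof is correct and follows exactly the same approach as the paper's: both argue that $\sigma(X,Y)$-compact sets are norm-closed (via Hausdorffness and comparison of topologies), invoke Lemma~\ref{lem:compactness} to see that $\sum_n M(A_n)$ is $\sigma(X,Y)$-compact, and conclude that the closures in the definition of strong multimeasure are redundant. The only difference is that you spell out the details that the paper leaves implicit.
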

\begin{proof}
Every $\sigma(X,Y)$-compact set is $\sigma(X,Y)$-closed and so norm closed.
The conclusion follows from Lemma~\ref{lem:compactness} and the very definition of strong multimeasure.
\end{proof}

\section{Main result}\label{section:main}

This section is devoted to proving our main result, Theorem~\ref{theo:OT-2} below.
We first isolate for future reference the following elementary observation:

\begin{lem}\label{lem:inf}
Let $C$ be a non-empty set, let $\varphi:C \to \mathbb{R}$ be a bounded function and define
$$
	s(\varphi,C):=\sup\{\varphi(x): \, x\in C\}.
$$
Then for every $x_0\in C$ we have
\begin{itemize}
\item $-s(-\varphi,C)=\inf\{\varphi(x): \, x\in C\}\leq \varphi(x_0)$;
\item $|\varphi(x_0)|\leq \max\{|s(\varphi,C)|,|s(-\varphi,C)|\}\leq |s(\varphi,C)| + |s(-\varphi,C)|$.
\end{itemize}
\end{lem}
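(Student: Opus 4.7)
The plan is to dispatch this by rewriting suprema as negated infima and then bounding $\varphi(x_0)$ between the infimum and supremum. Since the statement is a purely elementary observation about bounded real-valued functions, no serious obstacle is expected; the content is really just to have these estimates available in a convenient form for the proof of the main theorem.

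For the first bullet, I will use the identity $\inf_{x\in C}\varphi(x)=-\sup_{x\in C}(-\varphi(x))$, which is immediate from the order-reversing nature of negation on~$\mathbb{R}$: multiplying each inequality $\varphi(x)\leq M$ by $-1$ interchanges lower and upper bounds, so the greatest lower bound of $\varphi$ equals the negative of the least upper bound of $-\varphi$. This gives $-s(-\varphi,C)=\inf_{x\in C}\varphi(x)$, and since $x_0\in C$ the infimum is trivially no larger than $\varphi(x_0)$.

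For the second bullet, I combine the first bullet with the obvious inequality $\varphi(x_0)\leq s(\varphi,C)$, to obtain the two-sided bound
$$
	-s(-\varphi,C)\leq \varphi(x_0)\leq s(\varphi,C).
$$
Splitting into the cases $\varphi(x_0)\geq 0$ and $\varphi(x_0)<0$ then yields $|\varphi(x_0)|\leq s(\varphi,C)\leq|s(\varphi,C)|$ in the first case and $|\varphi(x_0)|\leq s(-\varphi,C)\leq|s(-\varphi,C)|$ in the second. In either case $|\varphi(x_0)|$ is dominated by the maximum of the two quantities, and the trivial inequality $\max\{a,b\}\leq a+b$ for non-negative reals $a,b$ gives the final estimate. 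The only point worth a moment's care is the case split according to the sign of $\varphi(x_0)$; apart from that the proof is a one-line verification.
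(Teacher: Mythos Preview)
Your proof is correct. The paper states this lemma as an ``elementary observation'' and offers no proof at all, so there is nothing to compare against; your argument fills in exactly the routine verification that the paper leaves to the reader.
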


\begin{theo}\label{theo:OT-2}
Let $X$ be a Banach space and let $M:\Sigma \to b(X)$ be a set-valued map defined on a $\sigma$-algebra~$\Sigma$. 
Suppose that there is a linear subspace $Y \sub X^*$ having the Orlicz-Thomas property such that:
\begin{enumerate}
\item[(i)] $M(A)$ is convex and $\sigma(X,Y)$-compact for every $A\in \Sigma$;
\item[(ii)] $s(x^*,M)$ is countably additive for every $x^*\in Y$.
\end{enumerate}
Then $M$ is a strong multimeasure.
\end{theo}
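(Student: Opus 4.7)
The plan is to follow the general strategy of \cite[Theorem~3.4]{cas-kad-rod-2}, replacing the weak topology by $\sigma(X,Y)$ and invoking the selection theorem for multimeasures in locally convex spaces proved in the Appendix; the Orlicz-Thomas property of~$Y$ will enter at two decisive moments. Let $(A_n)_n$ be disjoint in~$\Sigma$ with union $B=\bigcup_n A_n$. By Corollary~\ref{cor:strong-multimeasure-compact}, it suffices to prove that $\sum_n M(A_n)$ is unconditionally convergent and that $M(B)=\sum_n M(A_n)$. I first note that $M$ is finitely additive: for disjoint $A,A'\in\Sigma$, both $M(A\cup A')$ and $M(A)+M(A')$ are convex and $\sigma(X,Y)$-compact (by Lemma~\ref{lem:compactness}) with the same support function on the total set~$Y$, so they coincide by Hahn-Banach.

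The crucial reduction is to kill the affine part of~$M$. By the selection theorem, $M$ admits a selector $\nu_0:\Sigma\to X$ (meaning $\nu_0(A)\in M(A)$ for every $A\in\Sigma$) such that $x^*\circ\nu_0$ is countably additive for every $x^*\in Y$; by the Orlicz-Thomas property, $\nu_0$ is automatically norm countably additive. The translated map $M'(A):=M(A)-\nu_0(A)$ satisfies the same hypotheses as~$M$ and additionally $0\in M'(A)$ for every $A\in\Sigma$; combined with finite additivity, this forces $M'$ to be monotone, i.e., $C\sub D$ implies $M'(C)\sub M'(D)$. Now fix any selection $x_n\in M'(A_n)$. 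By Lemma~\ref{lem:inf},
$$
|x^*(x_n)|\leq|s(x^*,M'(A_n))|+|s(-x^*,M'(A_n))|\quad\text{for every }x^*\in Y,
$$
so $\sum_n|x^*(x_n)|<\infty$ by countable additivity of $s(\pm x^*,M')$. For each $P\sub\N$, the partial sums $\sum_{n\in P,\,n\leq N}x_n$ form a $\sigma(X,Y)$-Cauchy net and lie in~$M'(B)$ by finite additivity and monotonicity of~$M'$; since $M'(B)$ is $\sigma(X,Y)$-compact, this net $\sigma(X,Y)$-converges to a unique $\nu(P)\in M'(B)$. The resulting map $\nu:\mathcal{P}(\N)\to X$ satisfies $\nu(\{n\})=x_n$ and $x^*(\nu(P))=\sum_{n\in P}x^*(x_n)$ for every $x^*\in Y$, so $x^*\circ\nu$ is countably additive for every $x^*\in Y$. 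A second application of the Orlicz-Thomas property upgrades~$\nu$ to a norm countably additive vector measure, whence $\sum_n x_n=\nu(\N)$ converges unconditionally in norm. Transferring back via~$\nu_0$ (write $w_n\in M(A_n)$ as $w_n=(w_n-\nu_0(A_n))+\nu_0(A_n)$) gives unconditional norm convergence of $\sum_n w_n$ and the inclusion $\sum_n M(A_n)\sub M(B)$.

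For the reverse inclusion, given $y\in M(B)$, set $y':=y-\nu_0(B)\in M'(B)$ and use finite additivity to write recursively $y'=u_1+\cdots+u_N+v_N$ with $u_n\in M'(A_n)$ and $v_N\in M'(B\setminus\bigcup_{n\leq N}A_n)$. By Lemma~\ref{lem:inf}, $|x^*(v_N)|\leq|s(x^*,M')(B\setminus\bigcup_{n\leq N}A_n)|+|s(-x^*,M')(B\setminus\bigcup_{n\leq N}A_n)|\to 0$ for every $x^*\in Y$, so $v_N\to 0$ in~$\sigma(X,Y)$; thus $\sum_n u_n=y'$ in~$\sigma(X,Y)$, and by the preceding paragraph the same series converges in norm, yielding $y=\sum_n(u_n+\nu_0(A_n))\in\sum_n M(A_n)$. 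The step I expect to be the main obstacle is the construction of the measure $\nu:\mathcal{P}(\N)\to X$: one must verify that every subseries $\sum_{n\in P}x_n$ genuinely $\sigma(X,Y)$-converges to an element of~$X$, not merely to a linear functional on~$Y$. This is precisely where the reduction to $0\in M'(\cdot)$ pays off, by confining all partial sums to the single $\sigma(X,Y)$-compact set $M'(B)$, after which the Orlicz-Thomas hypothesis performs its decisive upgrade to norm countable additivity.
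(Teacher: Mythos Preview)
Your argument is correct and follows the same architecture as the paper's proof: finite additivity via Hahn--Banach, the Appendix selector upgraded by a first use of Orlicz--Thomas, the translation $M'=M-\nu_0$ to force $0\in M'(\cdot)$ and thereby confine all finite partial sums in a single $\sigma(X,Y)$-compact set, and a second Orlicz--Thomas application to the resulting measure $\nu:\mathcal{P}(\N)\to X$. The only substantive difference is in the equality $M(B)=\sum_n M(A_n)$: the paper observes that both sides are convex and $\sigma(X,Y)$-compact (the latter via Lemma~\ref{lem:compactness}) with the same support function on~$Y$ (via Lemma~\ref{lem:ucsets-1} and countable additivity of $s(x^*,M)$) and invokes Hahn--Banach once more, whereas you establish the two inclusions directly --- $\subseteq$ from the confinement of partial sums in the closed set~$M'(B)$, and $\supseteq$ by the recursive splitting $y'=u_1+\cdots+u_N+v_N$ with $v_N\to 0$ in~$\sigma(X,Y)$. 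Your route for this last step is a bit more laborious but self-contained; the paper's Hahn--Banach shortcut is cleaner but relies on the two preparatory lemmas about infinite sums of sets.
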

\begin{proof}
We divide the proof into several steps. Note that
the dual of the locally convex space $(X,\sigma(X,Y))$ is~$Y$ (because $Y$ is a linear subspace).

{\sc Step~1.} We have
\begin{equation}\label{eqn:fa}
	M(A\cup B)=M(A)+M(B)
	\quad\text{whenever $A,B \in \Sigma$ are disjoint}.
\end{equation} 
Indeed, note that the set $M(A)+M(B)$ is $\sigma(X,Y)$-compact and that
$$
	s(x^*,M(A)+M(B))=s(x^*,M(A))+s(x^*,M(B))
	\quad \text{for all $x^*\in X^*$}.
$$
Now, for each $x^*\in Y$, the fact that $s(x^*,M)$ is finitely additive yields
$$
	s(x^*,M(A\cup B))=s(x^*,M(A))+s(x^*,M(B))=s(x^*,M(A)+M(B)).
$$
Since both $M(A\cup B)$ and $M(A)+M(B)$ are convex and $\sigma(X,Y)$-closed, equality~\eqref{eqn:fa}
follows from the Hahn-Banach theorem.

{\sc Step~2.} 
By Theorem~\ref{theo:ca-selector} in the Appendix applied to $M$ as a set-valued map whose values are convex compact non-empty subsets 
of $(X,\sigma(X,Y))$, there is a map $\nu:\Sigma \to X$ such that: 
\begin{itemize}
\item $x^*\circ \nu$ is countably additive for all $x^*\in Y$;
\item $\nu(A)\in M(A)$ for all $A\in \Sigma$.
\end{itemize}
Since $Y$ has the Orlicz-Thomas property, $\nu$ is a countably additive vector measure.

Define a set-valued map $N:\Sigma \to b(X)$ by
$$
	N(A):=M(A)+\{-\nu(A)\}
	\quad
	\text{for all $A\in \Sigma$}.
$$
Note that $N(A)$ is convex and $\sigma(X,Y)$-compact for every $A\in \Sigma$ and that
$$
	s(x^*,N)=s(x^*,M)-x^*\circ \nu
$$
is countably additive for every $x^*\in Y$. 
By {\sc Step~1} applied to~$N$ we have 
\begin{equation}\label{eqn:fa2}
	N(A\cup B)=N(A)+N(B)
	\quad\text{whenever $A,B \in \Sigma$ are disjoint}.
\end{equation}

Let $\Omega$ be the set on which $\Sigma$ is a $\sigma$-algebra.
Given an arbitrary $A\in \Sigma$, we have $\nu(\Omega \setminus A)\in M(\Omega \setminus A)$, so
$0\in N(\Omega \setminus A)$ and therefore
$$
	N(A) \sub N(A)+N(\Omega\setminus A) \stackrel{\eqref{eqn:fa2}}{=} N(\Omega).
$$ 
It follows that the set 
$$
	C:=\bigcup_{A\in \Sigma}N(A)
$$
is contained in $N(\Omega)$ and so it is relatively $\sigma(X,Y)$-compact.

{\sc Step~3.} Let $(A_n)_n$ be a disjoint sequence in~$\Sigma$. We claim that the series
$\sum_n M(A_n)$ is unconditionally convergent. Indeed, since $\nu$ is countably additive, it suffices
to check that $\sum_n N(A_n)$ is unconditionally convergent.
Take $x_n\in N(A_n)$ for all $n\in \N$. We will prove that $\sum_n x_n$ is unconditionally convergent.

Note first that, for each $x^*\in Y$, both $s(x^*,N)$ and $s(-x^*,N)$ are countably additive and so
$$
    \sum_{n}|x^*(x_{n})|\leq
    \sum_{n}|s(x^*,N(A_{n}))|+
    \sum_{n}|s(-x^*,N(A_{n}))|<\infty
$$
(use Lemma~\ref{lem:inf}). 
Write 
$$
	z_P:=\sum_{n\in P} x_n
$$ 
for every finite non-empty set~$P \sub \N$. 

Fix a non-empty set $S\sub \N$.
We will show that the net $\mathcal{N}_S:=(z_P)_P$, where $P$ runs over all finite non-empty subsets of~$S$ ordered by inclusion,
is $\sigma(X,Y)$-convergent. On the one hand,
for each $x^*\in Y$ the series $\sum_n x^*(x_n)$ is absolutely convergent and so
the net of real numbers $(x^*(z_P))_P$, where $P$ runs over all finite non-empty subsets of~$S$ ordered by inclusion,
is convergent, with limit $\sum_{n\in S} x^*(x_n)$. Therefore, the net $\mathcal{N}_S$ has at most one $\sigma(X,Y)$-cluster point in~$X$.
On the other hand, for every finite non-empty set $P \sub S$ we have
$$
	z_P\in \sum_{n\in P} N(A_{n}) \stackrel{\eqref{eqn:fa2}}{=} N\left(\bigcup_{n\in P} A_{n}\right) \sub C
$$
and $C$ is relatively $\sigma(X,Y)$-compact. It follows that the net $\mathcal{N}_S$ is $\sigma(X,Y)$-convergent to some $\mu(S)\in X$. 

Set $\mu(\emptyset):=0$ and consider the map $\mu:\mathcal{P}(\N)\to X$, where $\mathcal{P}(\N)$ is the power set of~$\N$. 
For each $x^*\in Y$ the composition $x^*\circ \mu$ is countably additive, because
the series $\sum_n x^*(x_n)$ is absolutely convergent and  
$$
	x^*(\mu(S))=\sum_{n\in S}x^*(x_n)
	\quad
	\text{for every non-empty set $S\sub \N$}.
$$	
Since $Y$ has the Orlicz-Thomas property, it follows that $\mu$
is a countably additive vector measure. In particular, the series
$\sum_{n}x_{n}=\sum_n\mu(\{n\})$ is unconditionally convergent in~$X$. 

{\sc Step~4.} Let $(A_n)_n$ be a disjoint sequence in~$\Sigma$. Then
$\sum_n M(A_n)$ is unconditionally convergent (by {\sc Step~3}).
The sets $M(\bigcup_n A_n)$ and $\sum_n M(A_n)$ are $\sigma(X,Y)$-compact (the latter by Lemma~\ref{lem:compactness}) and so 
$\sigma(X,Y)$-closed. Moreover, they are 
convex. Since
$$
	s\left(x^*,M\left(\bigcup_n A_n\right)\right)=
	\sum_n s(x^*,M(A_n))=s\left(x^*,\sum_n M(A_n)\right)
	\quad
	\text{for all $x^*\in Y$}
$$
(by  the countable additivity of each $s(x^*,M)$ and Lemma~\ref{lem:ucsets-1}), from the Hahn-Banach theorem we get
$$
	M\left(\bigcup_n A_n\right)=\sum_n M(A_n).
$$
This proves that $M$ is a strong multimeasure. The proof is finished.
\end{proof}

By putting together the Diestel-Faires theorem and Theorem~\ref{theo:OT-2}, we get:

\begin{cor}\label{cor:DF-1}
Let $X$ be a Banach space not containing subspaces isomorphic to~$\ell_\infty$ and 
let $M:\Sigma \to b(X)$ be a set-valued map defined on a $\sigma$-algebra~$\Sigma$. 
Suppose that there is a total linear subspace $Y \sub X^*$ such that:
\begin{enumerate}
\item[(i)] $M(A)$ is convex and $\sigma(X,Y)$-compact for every $A\in \Sigma$;
\item[(ii)] $s(x^*,M)$ is countably additive for every $x^*\in Y$.
\end{enumerate}
Then $M$ is a strong multimeasure.
\end{cor}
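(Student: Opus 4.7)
The plan is to obtain Corollary \ref{cor:DF-1} as an immediate consequence of Theorem \ref{theo:OT-2}, once we verify that the hypothesis on $Y$ in Theorem \ref{theo:OT-2} is automatically satisfied in the present setting. The only difference between the corollary and the theorem is that here $Y$ is merely assumed to be total, rather than having the Orlicz-Thomas property. So the entire task reduces to showing that, under the additional assumption that $X$ contains no isomorphic copy of $\ell_\infty$, any total linear subspace $Y \sub X^*$ has the Orlicz-Thomas property.

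This is precisely the content of the Diestel-Faires theorem as recalled in the introduction: if $X$ has no subspace isomorphic to $\ell_\infty$ and $W \sub X^*$ is total, then countable additivity of $x^*\circ \nu$ for all $x^*\in W$ implies countable additivity of $\nu:\Sigma\to X$. In other words, every total subset of $X^*$ (and, in particular, every total linear subspace) has the Orlicz-Thomas property. Thus $Y$ has the Orlicz-Thomas property.

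With this in hand, the hypotheses of Theorem \ref{theo:OT-2} are met verbatim: $Y$ is a linear subspace of $X^*$ having the Orlicz-Thomas property, $M(A)$ is convex and $\sigma(X,Y)$-compact for every $A\in \Sigma$, and $s(x^*,M)$ is countably additive for every $x^*\in Y$. Applying Theorem \ref{theo:OT-2} yields that $M$ is a strong multimeasure, which is the desired conclusion.

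Since the whole argument is a direct appeal to two previously recorded results, there is no real obstacle to overcome; the only thing worth stating explicitly is the reduction ``total $+$ no copy of $\ell_\infty$ $\Rightarrow$ Orlicz-Thomas'', which is the point of the Diestel-Faires theorem and is already noted in the introduction.
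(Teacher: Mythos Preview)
Your proposal is correct and matches the paper's approach exactly: the paper states the corollary immediately after Theorem~\ref{theo:OT-2} with the one-line justification ``By putting together the Diestel-Faires theorem and Theorem~\ref{theo:OT-2}, we get,'' which is precisely your argument that total plus no copy of~$\ell_\infty$ gives the Orlicz-Thomas property, and then Theorem~\ref{theo:OT-2} applies.
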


In particular, we obtain the following result proved in \cite[Theorem~3.4]{mus13}. 
Given a Banach space~$X$, we denote by $cw^*k(X^*)$ the family of all convex weak$^*$-compact non-empty 
subsets of~$X^*$.

\begin{cor}[Musia{\l}]\label{cor:DFdual}
Let $X$ be a Banach space such that $X^*$ does not contain subspaces isomorphic to~$\ell_\infty$. Let 
$M:\Sigma \to cw^*k(X^*)$ be a set-valued map defined on a $\sigma$-algebra~$\Sigma$ such that $s(x,M)$ is countably additive for every $x\in X$. 
Then $M$ is a strong multimeasure.
\end{cor}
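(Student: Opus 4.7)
The plan is to derive this corollary directly from Corollary~\ref{cor:DF-1} by identifying the correct ambient Banach space and the correct total subspace of its dual. The ambient space will be $X^*$ (which by hypothesis contains no copy of~$\ell_\infty$) and the total subspace of $(X^*)^*=X^{**}$ will be the canonical image of~$X$ in~$X^{**}$.

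First I would verify the set-up. Writing $Y\sub X^{**}$ for the canonical image of~$X$, the Hahn-Banach theorem gives that $Y$ separates the points of~$X^*$, so $Y$ is total. Moreover, the topology $\sigma(X^*,Y)$ agrees with the weak$^*$-topology on~$X^*$, so the hypothesis that $M$ takes values in $cw^*k(X^*)$ says precisely that each $M(A)$ is convex, $\sigma(X^*,Y)$-compact and non-empty; boundedness of the values is automatic, since any $\sigma(X^*,Y)$-compact set is weak$^*$-compact and hence norm-bounded by the uniform boundedness principle. Under the identification $Y\cong X$, the hypothesis that $s(x,M)$ is countably additive for every $x\in X$ is exactly the hypothesis that $s(x^{**},M)$ is countably additive for every $x^{**}\in Y$.

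Thus all the hypotheses of Corollary~\ref{cor:DF-1}, applied with the Banach space~$X^*$ in place of~$X$ and with the total linear subspace $Y\sub (X^*)^*$ as above, are satisfied. Applying that corollary yields that $M$ is a strong multimeasure, which is the desired conclusion.

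There is no real obstacle here beyond keeping the duality bookkeeping straight; the only substantive input is that a weak$^*$-compact subset of a dual Banach space is automatically norm-bounded (so the values of~$M$ indeed lie in~$b(X^*)$), and the content of the assumption ``$X^*$ contains no subspace isomorphic to~$\ell_\infty$'' is exactly what is needed to invoke the Diestel-Faires theorem inside Corollary~\ref{cor:DF-1}.
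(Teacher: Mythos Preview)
Your proof is correct and matches the paper's approach exactly: the corollary is stated in the paper as an immediate particular case of Corollary~\ref{cor:DF-1}, obtained by taking the ambient Banach space to be~$X^*$ and the total linear subspace of $X^{**}$ to be the canonical image of~$X$. Your checks that the values of~$M$ are bounded (via the uniform boundedness principle) and that $\sigma(X^*,Y)$ coincides with the weak$^*$-topology are precisely the bookkeeping needed to make this specialization go through.
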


\begin{cor}\label{cor:DF-2}
Let $X$ be a Banach space and let $Y \sub X^*$ be a linear subspace having the Orlicz-Thomas property. Let 
$M:\Sigma \to cwk(X)$ be a set-valued map defined on a $\sigma$-algebra~$\Sigma$ such that $s(x^*,M)$ is countably additive for every $x^*\in Y$.
Then $M$ is a strong multimeasure.
\end{cor}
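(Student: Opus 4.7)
The plan is to reduce Corollary~\ref{cor:DF-2} to a direct application of Theorem~\ref{theo:OT-2}. Hypothesis (ii) of that theorem is already assumed verbatim, so the only work is to verify hypothesis~(i): that each $M(A)$ is convex and $\sigma(X,Y)$-compact.

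Convexity is immediate from $M(A)\in cwk(X)$. For $\sigma(X,Y)$-compactness, I would use the simple observation that $\sigma(X,Y)$ is a topology coarser than the weak topology $\sigma(X,X^*)$, since $Y \sub X^*$; equivalently, the identity map
$$
\mathrm{id}:(X,\sigma(X,X^*))\longrightarrow (X,\sigma(X,Y))
$$
is continuous. Thus the continuous image of the weakly compact set $M(A)$, which is $M(A)$ itself, is $\sigma(X,Y)$-compact.

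With both hypotheses of Theorem~\ref{theo:OT-2} in place, that theorem yields directly that $M$ is a strong multimeasure, completing the proof.

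There is essentially no obstacle here: the corollary is obtained by observing that the hypothesis $M(A)\in cwk(X)$ is strictly stronger than $\sigma(X,Y)$-compactness, so Corollary~\ref{cor:DF-2} is a specialization of Theorem~\ref{theo:OT-2} to the particularly natural setting in which the set values are already weakly compact.
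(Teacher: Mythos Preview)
Your proposal is correct and matches the paper's approach: the paper states Corollary~\ref{cor:DF-2} without proof precisely because it is an immediate specialization of Theorem~\ref{theo:OT-2}, and your verification that weakly compact sets are $\sigma(X,Y)$-compact (via the coarser-topology argument) is exactly the missing observation needed.
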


\begin{rem}\label{rem:cases}
\rm 
\begin{enumerate}
\item[(i)] Let $X$ be a Banach space. As we already pointed out in the Introduction, a set $Y \sub X^*$ has the Orlicz-Thomas property
if it contains a James boundary of~$X$ (see \cite[Proposition~2.9]{fer-alt-4}, cf. \cite[Remark~3.2(i)]{nyg-rod}), that is, a set $B \sub B_{X^*}$
with the property that for every $x\in X$ there is $x^*\in B$ such that $\|x\|=x^*(x)$. We stress that in this case every bounded, $\sigma(X,Y)$-compact subset of~$X$
is weakly compact, according to a striking result of Pfitzner~\cite{pfi-J}.
\item[(ii)] An example is given by the Banach space $X=C(K)$ of all continuous real-valued functions on 
a compact Hausdorff topological space~$K$, equipped with the supremum norm, and an arbitrary
set $Y \sub C(K)^*$ containing $B:=\{\delta_t:t\in K\}$, where $\delta_t\in C(K)^*$ denotes the evaluation functional at the point~$t\in K$.
In this case, $\sigma(C(K),Y)$-compact subsets of~$X$ are pointwise compact and so, for bounded sets, $\sigma(C(K),Y)$-compactness
is equivalent to weak compactness (see, e.g., \cite[Theorem~3.139]{fab-ultimo}).
\end{enumerate}
\end{rem}

\section{Application to the factorization of multimeasures}\label{section:applications}

Recall that, for an arbitrary Banach space~$X$, a set-valued map defined on a $\sigma$-algebra and taking values in $cwk(X)$ is a multimeasure
if and only if is a strong multimeasure. This section is devoted to proving the following:

\begin{theo}\label{theo:DFJP}
Let $X$ be a Banach space and let $M:\Sigma\to cwk(X)$ be a multimeasure defined on a $\sigma$-algebra~$\Sigma$. Then there exist
a reflexive Banach space~$Z$, a one-to-one, bounded, linear operator $T:Z\to X$ and a multimeasure $\hat{M}:\Sigma \to cwk(Z)$ such that
$T(\hat{M}(A))=M(A)$ for all $A\in \Sigma$.
\end{theo}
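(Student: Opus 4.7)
The plan is to adapt the Davis--Figiel--Johnson--Pe{\l}czy{\'n}ski (DFJP) factorization argument used in~\cite{rod15} for vector measures. Before applying DFJP I need to ``center'' $M$ by subtracting a countably additive selector. Use Theorem~\ref{theo:ca-selector} to obtain a countably additive selector $\nu:\Sigma\to X$ of~$M$ and set $N(A):=M(A)-\nu(A)$. Then $N$ takes values in~$cwk(X)$, contains~$0$ for every~$A$, and $s(x^*,N)=s(x^*,M)-x^*\circ\nu$ is countably additive for every $x^*\in X^*$; hence $N$ is a (strong) multimeasure. Combining the additivity relation $N(\Omega)=N(A)+N(\Omega\setminus A)$ with $0\in N(\Omega\setminus A)$ yields $N(A) \sub N(\Omega)$ for every $A\in\Sigma$. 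Consequently the set
$$
W:=\overline{\mathrm{aco}}\bigl(N(\Omega)\cup\overline{\mathrm{conv}}(\nu(\Sigma))\bigr)
$$
is an absolutely convex weakly compact subset of~$X$ (by Krein's theorem, using Bartle--Dunford--Schwartz for the weak compactness of $\overline{\mathrm{conv}}(\nu(\Sigma))$) containing every $N(A)$ and every $\nu(A)$.

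Apply DFJP to~$W$ to obtain a reflexive Banach space~$Z$ and a one-to-one bounded linear operator $T:Z\to X$ with $W \sub T(B_Z)$. Since $T$ is one-to-one I can define $\tilde{\nu}(A):=T^{-1}(\nu(A))\in B_Z$ and $\hat{N}(A):=T^{-1}(N(A)) \sub B_Z$. Each $\hat{N}(A)$ is convex (linearity of~$T$), non-empty (it contains~$0$), and weakly closed in~$Z$ (since $T$ is weak-to-weak continuous and $N(A)$ is weakly closed); being also contained in the weakly compact ball~$B_Z$ (reflexivity of~$Z$), it lies in $cwk(Z)$, and by injectivity $T(\hat{N}(A))=N(A)$.

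For each $x^*\in X^*$ and $z^*:=T^*x^*$, the identity $T(\hat{N}(A))=N(A)$ gives $s(z^*,\hat{N})=s(x^*,N)$, while $z^*\circ\tilde{\nu}=x^*\circ\nu$; both are countably additive. Since $T$ is one-to-one, $T^*(X^*)$ is a total linear subspace of~$Z^*$; since $Z$ is reflexive (and thus contains no subspace isomorphic to~$\ell_\infty$), the Diestel--Faires theorem guarantees that $T^*(X^*)$ has the Orlicz--Thomas property. Corollary~\ref{cor:DF-2} then yields that $\hat{N}$ is a strong multimeasure, and Diestel--Faires directly yields that $\tilde{\nu}$ is countably additive. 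Setting $\hat{M}(A):=\hat{N}(A)+\tilde{\nu}(A)$ produces an element of $cwk(Z)$ with $T(\hat{M}(A))=N(A)+\nu(A)=M(A)$, and $s(z^*,\hat{M})=s(z^*,\hat{N})+\langle z^*,\tilde{\nu}(A)\rangle$ is countably additive for every $z^*\in Z^*$, so $\hat{M}$ is a multimeasure. The main technical point is the lifting step, which depends on reflexivity of~$Z$ both for the weak compactness of the pullbacks $T^{-1}(N(A))$ and for the Diestel--Faires transfer of countable additivity from $T^*(X^*)$ to all of~$Z^*$.
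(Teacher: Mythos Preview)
Your argument is correct, and the lifting step (pulling back through~$T$ and invoking the Orlicz--Thomas property of $T^*(X^*)\sub Z^*$ via reflexivity and Corollary~\ref{cor:DF-2}) is exactly what the paper does. The difference lies in how you produce the weakly compact set fed into DFJP.

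The paper simply quotes Corollary~\ref{cor:weaklycompactrange}: the union $C=\bigcup_{A\in\Sigma}M(A)$ is relatively weakly compact, so DFJP applied to~$C$ gives $T(B_Z)\supseteq C\supseteq M(A)$ for every~$A$, and one defines $\hat{M}(A):=T^{-1}(M(A))$ directly. No centering, no splitting.

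You instead re-derive a version of that relative weak compactness by hand: subtract a countably additive selector~$\nu$ (Theorem~\ref{theo:ca-selector}) to force $0\in N(A)$ and hence $N(A)\sub N(\Omega)$, then combine the weakly compact set $N(\Omega)$ with the Bartle--Dunford--Schwartz envelope of~$\nu(\Sigma)$. This buys you something: your route avoids James' compactness theorem, which underlies Corollary~\ref{cor:weaklycompactrange} (via Theorem~\ref{theo:range}), replacing it with the more elementary ingredients of the selector theorem and Bartle--Dunford--Schwartz. The price is a longer argument, since you must lift $N$ and $\nu$ separately and recombine in~$Z$, whereas the paper lifts $M$ in one stroke. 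Note, incidentally, that your ``centering'' trick is precisely the device already used inside the proof of Theorem~\ref{theo:OT-2} (Step~2), so you are in effect replaying that argument here rather than reusing the packaged Corollary~\ref{cor:weaklycompactrange}.
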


Besides our previous work, the main tools used to get this result are the Davis-Figiel-Johnson-Pe{\l}czy\'nski factorization method
and Corollary~\ref{cor:weaklycompactrange} below, which is known (see \cite[Proposition~2.4]{kan}) and a particular case of the following theorem.

Let $X$ be a Banach space and let $Y \sub X^*$ be a total linear subspace. The Mackey topology $\mu(X,Y)$ is the 
(locally convex Hausdorff) topology on~$X$ of uniform convergence on absolutely convex weak$^*$-compact subsets of~$Y$.
The locally convex spaces $(X,\sigma(X,Y))$ and $(X,\mu(X,Y))$ have the same topological dual (namely, $Y$),
the same bounded sets and the same closed convex sets (see, e.g., \cite[Section~3.5]{fab-ultimo}).
When $Y=X^*$, $\mu(X,Y)$ is just the norm topology on~$X$. When $Y$ is norm closed, the locally convex space
$(X,\mu(X,Y))$ is complete if and only if it is quasi-complete (its closed bounded subsets are complete), see \cite[Proposition~2.6]{gui-mar-rod}.
The completeness of $(X,\mu(X,Y))$ has been studied in \cite{bon-cas,gui-mar-rod,gui-mon-ziz} among others. For instance, 
$(X,\mu(X,Y))$ is complete whenever $X$ is weakly compactly generated and $Y$ is norming and norm closed (see \cite[Theorem~4]{gui-mon-ziz}).

\begin{theo}\label{theo:range}
Let $X$ be a Banach space and let $Y \sub X^*$ be a total linear subspace such that $(X,\mu(X,Y))$ is quasi-complete.
Let $M:\Sigma\to b(X)$ be a set-valued map defined on a $\sigma$-algebra~$\Sigma$ such that:
\begin{enumerate}
\item[(i)] $M(A)$ is relatively $\sigma(X,Y)$-compact for every $A\in \Sigma$;
\item[(ii)] $s(x^*,M)$ is countably additive for every $x^*\in Y$.
\end{enumerate}
Then $\bigcup_{A\in \Sigma}M(A)$ is relatively $\sigma(X,Y)$-compact.
\end{theo}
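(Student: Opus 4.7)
The plan is to reduce to the hypotheses of Theorem~\ref{theo:OT-2} (convex, $\sigma(X,Y)$-compact values) via a Krein-type argument, and then combine the containment argument from Step~2 of that proof with a \emph{single-valued} Bartle-Dunford-Schwartz theorem for $\sigma(X,Y)$-countably additive vector measures.

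For the Krein reduction, for each $A\in\Sigma$ I would let $\tilde{M}(A)$ denote the $\sigma(X,Y)$-closed convex hull of $M(A)$. The closed convex subsets of $(X,\sigma(X,Y))$ and $(X,\mu(X,Y))$ coincide, and $(X,\mu(X,Y))$ is quasi-complete by hypothesis; hence Krein's theorem applied in the quasi-complete space $(X,\mu(X,Y))$ (whose weak topology is precisely $\sigma(X,Y)$) ensures that each $\tilde{M}(A)$ is $\sigma(X,Y)$-compact. Support functionals against elements of~$Y$ are unchanged on passing from $M$ to $\tilde{M}$, so both hypotheses pass to~$\tilde{M}$, and the inclusion $\bigcup_A M(A)\sub\bigcup_A \tilde{M}(A)$ reduces the problem to the case where each $M(A)$ is convex and $\sigma(X,Y)$-compact.

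I would then invoke Theorem~\ref{theo:ca-selector} from the Appendix to produce a selector $\nu:\Sigma\to X$ with $\nu(A)\in M(A)$ and $x^*\circ\nu$ countably additive for every $x^*\in Y$ (note that $\nu$ is \emph{not} assumed to be a norm-countably additive vector measure, since $Y$ is not required to have the Orlicz-Thomas property), and then follow Steps~1 and~2 of the proof of Theorem~\ref{theo:OT-2} verbatim for the shifted map $N(A):=M(A)-\nu(A)$: finite additivity of~$N$ combined with $0\in N(\Omega\setminus A)$ yields $\bigcup_{A\in\Sigma} N(A)\sub N(\Omega)$, a $\sigma(X,Y)$-compact set. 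Consequently,
$$
	\bigcup_{A\in\Sigma} M(A)=\bigcup_{A\in\Sigma}(N(A)+\nu(A))\sub N(\Omega)+\{\nu(A):A\in\Sigma\}.
$$

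To close the argument I would invoke the classical single-valued Bartle-Dunford-Schwartz theorem in the locally convex setting: a vector measure $\nu:\Sigma\to X$ with $x^*\circ\nu$ countably additive for every $x^*\in Y$ has relatively $\sigma(X,Y)$-compact range whenever $(X,\mu(X,Y))$ is quasi-complete. Since the sum of a $\sigma(X,Y)$-compact set and a relatively $\sigma(X,Y)$-compact set is relatively $\sigma(X,Y)$-compact (image of the product of the closures under continuous vector addition), this would finish the proof. The main obstacle is precisely this last ingredient: one needs the single-valued BDS statement for $\sigma$-countably additive measures taking values in a quasi-complete Mackey space, which is the whole reason the quasi-completeness hypothesis appears in the theorem. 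It is classical, and the usual Rybakov-type control-measure argument goes through, with weak limits being realised inside~$X$ thanks precisely to the quasi-completeness of $(X,\mu(X,Y))$.
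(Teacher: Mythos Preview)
Your argument is correct, but the paper proceeds quite differently and more directly. Rather than convexifying via Krein, passing to a selector, shifting, and then invoking the single-valued Bartle--Dunford--Schwartz theorem as a black box, the paper applies James' compactness theorem in $(X,\sigma(X,Y))$ to the set $C:=\bigcup_{A\in\Sigma}M(A)$ itself. First one checks that $C$ is $\sigma(X,Y)$-bounded (using that each $s(\pm x^*,M)$ has bounded range), so that $\overline{{\rm conv}(C)}^{\sigma(X,Y)}$ is $\mu(X,Y)$-complete by quasi-completeness; then, for each $x^*\in Y$, the Hahn decomposition of the signed measure $s(x^*,M)$ produces a set $H\in\Sigma$ on whose (relatively compact) value $M(H)$ the functional $x^*$ attains the global supremum over~$C$, and James' theorem finishes. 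The paper's route is self-contained and never needs the values to be convex or closed, so the Krein step is avoided entirely; it also treats the multi-valued and single-valued cases uniformly, whereas your approach ultimately \emph{reduces} the theorem to its single-valued special case (take $M(A)=\{\nu(A)\}$) and then imports that case from the literature. Your reduction does have the merit of making explicit that the set-valued statement follows formally from the vector-measure one plus a selector, but it trades a short direct argument for heavier external machinery.
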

\begin{proof} We will apply James' compactness theorem (see, e.g., \cite[6.8]{flo})
in the locally convex space $(X,\sigma(X,Y))$. Write $C:=\bigcup_{A\in \Sigma}M(A)$. For each $x^*\in Y$ we have
\begin{equation}\label{eqn:bounded}
	-s(-x^*,M(A))\leq x^*(x) \leq s(x^*,M(A))
	\quad\text{for all $A\in \Sigma$ and $x\in M(A)$}. 
\end{equation}
Since both $s(-x^*,M)$ and $s(x^*,M)$ are countably additive, they have bounded range and 
so~\eqref{eqn:bounded} implies that the set $\{x^*(x):x\in C\}$ is bounded. 
It follows that $C$ is $\sigma(X,Y)$-bounded and so $\overline{{\rm conv}(C)}^{\sigma(X,Y)}$ is $\mu(X,Y)$-complete. Now, according to James'
compactness theorem, in order to prove
that $\overline{C}^{\sigma(X,Y)}$ is $\sigma(X,Y)$-compact it suffices to check that
each $x^*\in Y$ attains its supremum on~$\overline{C}^{\sigma(X,Y)}$.

Fix $x^*\in Y$. Since $s(x^*,M)$ is countably additive,  the Hahn decomposition theorem ensures the existence of $H\in \Sigma$ such that 
\begin{itemize}
\item $s(x^*,M(B))\geq 0$ for every $B\in \Sigma$ with $B \sub H$; 
\item $s(x^*,M(B))\leq 0$ for every $B\in \Sigma$ with $B\sub \Omega \setminus H$.
\end{itemize}
Since $M(H)$ is relatively $\sigma(X,Y)$-compact, there is $x_0\in \overline{M(H)}^{\sigma(X,Y)} \sub \overline{C}^{\sigma(X,Y)}$ 
such that 
$$
	x^*(x_0)=s\left(x^*,\overline{M(H)}^{\sigma(X,Y)}\right)=s(x^*,M(H)).
$$ 
Then for every $A\in \Sigma$ we have
\begin{eqnarray*}
	s(x^*,M(A))&=&s(x^*,M(A\cap H))+s(x^*,M(A\setminus H)) \\
	&\leq &s(x^*,M(A\cap H)) \ \leq \ s(x^*,M(H)) \ = \ x^*(x_0).
\end{eqnarray*}
Hence, $x^*(x)\leq x^*(x_0)$ for all $x\in C$ and so $x^*(x)\leq x^*(x_0)$ for all $x\in \overline{C}^{\sigma(X,Y)}$. This shows that
$x^*$ attains it supremum on~$\overline{C}^{\sigma(X,Y)}$ at~$x_0$. The proof is finished.
\end{proof}

\begin{cor}\label{cor:weaklycompactrange}
Let $X$ be a Banach space and let $M$ be a multimeasure defined on a $\sigma$-algebra~$\Sigma$ such that $M(A)$ is a relatively weakly
compact non-empty subset of~$X$ for every $A\in \Sigma$. Then $\bigcup_{A\in \Sigma}M(A)$ is relatively weakly compact.
\end{cor}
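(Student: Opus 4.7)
The plan is to deduce this as an immediate specialization of Theorem~\ref{theo:range} with $Y = X^*$. With that choice, $\sigma(X,Y)$ is exactly the weak topology on~$X$, and, as noted in the text just before Theorem~\ref{theo:range}, the Mackey topology $\mu(X,X^*)$ coincides with the norm topology on~$X$. Consequently $(X,\mu(X,Y))$ is complete, hence trivially quasi-complete, so the standing hypothesis of Theorem~\ref{theo:range} is satisfied for free.

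It then remains only to verify the two hypotheses of the theorem. Hypothesis~(i) that $M(A)$ be relatively $\sigma(X,Y)$-compact for every $A \in \Sigma$ is precisely the assumption that $M(A)$ is relatively weakly compact. Hypothesis~(ii) that $s(x^*,M)$ be countably additive for every $x^* \in Y = X^*$ is exactly the definition of $M$ being a multimeasure. Applying Theorem~\ref{theo:range} gives that $\bigcup_{A \in \Sigma} M(A)$ is relatively $\sigma(X,X^*)$-compact, i.e.\ relatively weakly compact, which is the desired conclusion.

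There is essentially no obstacle here: all the work has been done in Theorem~\ref{theo:range}, whose proof already rests on James' compactness theorem together with the Hahn decomposition applied to each $s(x^*,M)$. The only point worth noting is that in the Banach-space setting $Y = X^*$ one avoids any quasi-completeness subtlety, since the Mackey topology collapses to the norm topology; this makes the corollary a clean and frictionless consequence.
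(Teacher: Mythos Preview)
Your proof is correct and is exactly the specialization the paper intends: the corollary is stated without proof precisely because it is the particular case $Y=X^*$ of Theorem~\ref{theo:range}, with the Mackey topology reducing to the norm topology so that quasi-completeness is automatic. The only minor point you leave implicit is that relatively weakly compact sets are bounded, so $M$ indeed takes values in~$b(X)$ as required by Theorem~\ref{theo:range}.
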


Our next result gives a sufficient condition for a set-valued map to be a multimeasure without assuming that its values are convex.

\begin{pro}\label{pro:KS}
Under the assumptions of Theorem~\ref{theo:range}, if in addition $Y$ has the Orlicz-Thomas property and $s(x^*,M)$ is finitely additive for every $x^*\in X^*$,
then $M$ is a multimeasure.
\end{pro}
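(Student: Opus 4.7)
The plan is to reduce to the convex-valued setting via Theorem~\ref{theo:OT-2} and then propagate countable additivity back to~$M$ by a squeeze argument. Define
\[
	\tilde{M}(A):=\overline{\mathrm{conv}(M(A))}^{\sigma(X,Y)}
	\quad\text{for all $A\in \Sigma$}.
\]
First I would check that $\tilde{M}$ satisfies the hypotheses of Theorem~\ref{theo:OT-2}. Convexity is immediate. For $\sigma(X,Y)$-compactness, I invoke Krein's theorem for locally convex spaces in $(X,\mu(X,Y))$: since $M(A)$ is $\sigma(X,Y)$-relatively compact and bounded, $\tilde{M}(A)$ is bounded and $\mu(X,Y)$-closed (recall that closed convex sets for $\sigma(X,Y)$ and $\mu(X,Y)$ coincide), hence $\mu(X,Y)$-complete by the assumed quasi-completeness, and Krein's theorem then gives $\sigma(X,Y)$-compactness. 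Since the dual of $(X,\sigma(X,Y))$ is exactly~$Y$, each $x^*\in Y$ is linear and $\sigma(X,Y)$-continuous, so $s(x^*,\tilde{M}(A))=s(x^*,M(A))$ for every $A\in \Sigma$; hence $s(x^*,\tilde{M})$ is countably additive on~$Y$.

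Because $Y$ has the Orlicz-Thomas property, Theorem~\ref{theo:OT-2} applies to $\tilde{M}$ and yields that $\tilde{M}$ is a strong multimeasure; in particular $s(x^*,\tilde{M})$ is countably additive for \emph{every} $x^*\in X^*$. Now fix $x^*\in X^*$ and a decreasing sequence $(B_N)_N$ in~$\Sigma$ with $B_N\downarrow\emptyset$. Since $M(B_N)\subseteq \tilde{M}(B_N)$ is non-empty and $\inf \leq \sup$, we have the chain
\[
	-s(-x^*,\tilde{M}(B_N))\ \leq\ -s(-x^*,M(B_N))\ \leq\ s(x^*,M(B_N))\ \leq\ s(x^*,\tilde{M}(B_N)),
\]
whose extremes both tend to~$0$ by the countable additivity of $s(\pm x^*,\tilde{M})$. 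Therefore $s(x^*,M(B_N))\to 0$, and this together with the assumed finite additivity of $s(x^*,M)$ upgrades to full countable additivity of the scalar set function $s(x^*,M)$. Since $x^*\in X^*$ was arbitrary, $M$ is a multimeasure.

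The main obstacle is the $\sigma(X,Y)$-compactness of the convexification $\tilde{M}(A)$: this is precisely the step where the quasi-completeness of $(X,\mu(X,Y))$ is essential, for otherwise one could not feed $\tilde{M}$ into Theorem~\ref{theo:OT-2}. Once that is in hand, the transfer back to $M$ is a clean squeeze that uses only monotonicity of support functionals and the standard fact that a finitely additive scalar set function which is continuous at~$\emptyset$ is countably additive.
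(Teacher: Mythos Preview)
Your proof is correct and follows essentially the same approach as the paper: define $\tilde{M}(A)$ as the $\sigma(X,Y)$-closed convex hull of~$M(A)$, use quasi-completeness and Krein's theorem to verify $\sigma(X,Y)$-compactness, apply Theorem~\ref{theo:OT-2} to~$\tilde{M}$, and then transfer countable additivity back to~$M$ by a squeeze. The only cosmetic difference is in the last step: the paper bounds $|s(x^*,M(A))| \leq |s(x^*,\tilde{M}(A))| + |s(-x^*,\tilde{M}(A))|$ directly via Lemma~\ref{lem:inf} and concludes by domination, whereas you phrase the same idea as continuity at~$\emptyset$ for a finitely additive set function.
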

\begin{proof}
Fix $A\in \Sigma$. Since $M(A)$ is relatively $\sigma(X,Y)$-compact,
it is $\sigma(X,Y)$-bounded and so
$$
	\tilde{M}(A):=\overline{{\rm conv}(M(A))}^{\sigma(X,Y)}
$$
is $\mu(X,Y)$-complete. Hence,  
Krein's theorem (see, e.g., \cite[7.1]{flo}) implies that $\tilde{M}(A)$ is $\sigma(X,Y)$-compact.

Note that for each $x^*\in Y$ we have $s(x^*,\tilde{M})=s(x^*,M)$, so $s(x^*,\tilde{M})$ is countably additive. Now, 
Theorem~\ref{theo:OT-2} applies to conclude that the set-valued map $\tilde{M}$ is a strong multimeasure and, therefore, $\tilde{M}$
is a multimeasure. 

Fix $x^*\in X^*$. Since $s(x^*,M)$ is finitely additive, both $s(x^*,\tilde{M})$ and $s(-x^*,\tilde{M})$ are countably additive and 
$$
	|s(x^*,M(A))| \leq |s(x^*,\tilde{M}(A))|+|s(-x^*,\tilde{M}(A))|
	\quad\text{for all $A\in \Sigma$} 
$$ 
(bear in mind Lemma~\ref{lem:inf}), we conclude that $s(x^*,M)$ is countably additive as well. This finishes the proof.
\end{proof}

A Banach space $X$ is said to have {\em property~$(\mathcal{E})$} (of Efremov) if for every convex bounded set $C\sub X^*$ 
and for every $x^*$ belonging to the weak$^*$-closure of~$C$ there is a sequence $(x_n^*)_n$ in~$C$ which weak$^*$-converges to~$x^*$.
This class of Banach spaces includes all weakly compactly generated ones, see \cite{avi-mar-rod,pli3,pli-yos-2} for further information.
In \cite[Section~3]{gui-mar-rod} it was shown that if $X$ has property~$(\mathcal{E})$
and $Y \sub X^*$ is a norming norm closed linear subspace, then $(X,\mu(X,Y))$ is complete. 

\begin{cor}\label{cor:Efremov}
Let $X$ be a Banach space having property~$(\mathcal{E})$ and let $Y \sub X^*$ be a norming norm closed linear subspace.
Let $M:\Sigma\to b(X)$ be a set-valued map defined on a $\sigma$-algebra~$\Sigma$ such that:
\begin{enumerate}
\item[(i)] $M(A)$ is relatively $\sigma(X,Y)$-compact for every $A\in \Sigma$;
\item[(ii)] $s(x^*,M)$ is countably additive for every $x^*\in Y$.
\end{enumerate}
Then the following statements hold: 
\begin{enumerate}
\item[(a)] $\bigcup_{A\in \Sigma}M(A)$ is relatively $\sigma(X,Y)$-compact.
\item[(b)] If $s(x^*,M)$ is finitely additive for all $x^*\in X^*$, then $M$ is a multimeasure.
\end{enumerate}
\end{cor}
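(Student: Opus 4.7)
The plan is to reduce (a) directly to Theorem~\ref{theo:range} and (b) to Proposition~\ref{pro:KS}, after supplying the one missing hypothesis there, namely that $Y$ has the Orlicz-Thomas property.

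For (a), the paragraph preceding the statement already records that under the present hypotheses $(X,\mu(X,Y))$ is complete, and in particular quasi-complete. Since a norming subspace is automatically total, every hypothesis of Theorem~\ref{theo:range} is in place, and (a) follows at once.

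For (b), my aim is first to establish that $Y$ has the Orlicz-Thomas property and then invoke Proposition~\ref{pro:KS}. Fix a $\sigma$-algebra $\Sigma$ and a map $\nu:\Sigma\to X$ such that $x^*\circ\nu$ is countably additive for every $x^*\in Y$. Since $Y$ is total, $\nu$ is finitely additive; and since $Y$ is norming, the scalar boundedness of each $x^*\circ\nu$ upgrades to norm boundedness of $\nu(\Sigma)$. Now given any $x^*\in X^*$, the bipolar theorem together with the norming hypothesis yields some $R>0$ with $x^*\in\overline{R\cdot B_Y}^{w^*}$; the convex bounded set $R\cdot B_Y\sub X^*$ is then a set to which property~$(\mathcal{E})$ applies, delivering a sequence $(x_n^*)$ in $R\cdot B_Y\sub Y$ that weak$^*$-converges to~$x^*$. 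Each $x_n^*\circ\nu$ is countably additive and $x_n^*(\nu(A))\to x^*(\nu(A))$ for every $A\in\Sigma$, so the Vitali-Hahn-Saks (Nikodym convergence) theorem forces $x^*\circ\nu$ to be countably additive. The classical Orlicz-Pettis theorem then yields countable additivity of~$\nu$ in norm, which shows that $Y$ has the Orlicz-Thomas property. With this in hand, Proposition~\ref{pro:KS} closes the argument.

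The main obstacle I expect is precisely the verification of the Orlicz-Thomas property for~$Y$: the crucial leverage comes from property~$(\mathcal{E})$, which allows us to reach every functional in~$X^*$ as the weak$^*$-limit of a \emph{sequence} from~$Y$ rather than merely of a net, which is exactly what is needed to invoke Vitali-Hahn-Saks.
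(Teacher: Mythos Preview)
Your proof is correct. Part~(a) matches the paper exactly. For part~(b) you and the paper both reduce to Proposition~\ref{pro:KS} after verifying that $Y$ has the Orlicz-Thomas property, but you reach that conclusion by a genuinely different route.

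The paper argues structurally: it invokes the (cited) fact that a Banach space with property~$(\mathcal{E})$ contains no copy of~$\ell_\infty$, and then appeals to the Diestel--Faires theorem to conclude that every total subset of~$X^*$ (in particular~$Y$) has the Orlicz-Thomas property. Your argument is more hands-on: you use property~$(\mathcal{E})$ directly on the bounded convex set $R\cdot B_Y$ to approximate an arbitrary $x^*\in X^*$ by a \emph{sequence} in~$Y$, and then the Nikodym convergence theorem transfers countable additivity of $x^*_n\circ\nu$ to $x^*\circ\nu$. This is self-contained and avoids both the black-box implication ``property~$(\mathcal{E})\Rightarrow$ no $\ell_\infty$'' and the Diestel--Faires machinery; the paper's route is shorter once those results are taken for granted. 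Two minor remarks: the norm boundedness of $\nu(\Sigma)$ that you record is not actually used afterwards (and its justification tacitly relies on the Nikodym boundedness principle), and finite additivity of $\nu$ is likewise not needed for the Nikodym-convergence step, so both can be dropped without harm.
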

\begin{proof}
The first part follows from Theorem~\ref{theo:range} and the comments preceeding the corollary. For the second part, 
since a Banach space having property~$(\mathcal{E})$ cannot contain subspaces isomorphic to~$\ell_\infty$
(see \cite[Section~3]{pli3}), we can apply the Diestel-Faires theorem and Proposition~\ref{pro:KS}.
\end{proof}

We are now ready to deal with the main result of this section.

\begin{proof}[Proof of Theorem~\ref{theo:DFJP}]
By Corollary~\ref{cor:weaklycompactrange}, the set $C:=\bigcup_{A\in \Sigma}M(A)$ is relatively weakly compact. 
The Davis-Figiel-Johnson-Pe{\l}czy\'nski factorization theorem (see, e.g., \cite[Theorem~13.22]{fab-ultimo}) ensures the existence of 
a reflexive Banach space~$Z$ and a one-to-one, bounded, linear operator $T:Z\to X$ 
such that $T(B_Z)\supseteq C$.

For each $A\in \Sigma$ we have $M(A) \sub T(B_Z)$, so $\hat{M}(A):=T^{-1}(M(A)) \sub B_Z$
and therefore $\hat{M}(A) \in cwk(Z)$. Let us check that the set-valued map $\hat{M}:\Sigma \to cwk(Z)$ satisfies 
the required properties. By definition, we have $T(\hat{M}(A))=M(A)$ for all $A\in \Sigma$. Note that $s(x^*\circ T,\hat{M})=s(x^*,M)$
is countably additive for all $x^*\in X^*$. Since $Y:=\{x^*\circ T:x^*\in X^*\}$ is a total linear subspace of~$Z^*$
(because $T$ is one-to-one) and $Z$ is reflexive, $Y$ is norm dense in~$Z^*$
and so it has the Orlicz-Thomas property (see, e.g., \cite[Lemma~3.1]{nyg-rod}).
From Corollary~\ref{cor:DF-2} we conclude that $\hat{M}$ is a multimeasure.
\end{proof}

\section{Appendix: selectors of multimeasures}

Given a locally convex Hausdorff space $E$, we denote by $E'$ its topological dual and we denote by 
$ck(E)$ the family of all convex compact non-empty subsets of~$E$. Given $\varphi \in E'$ and $K \in ck(E)$, 
we write $s(\varphi,K):=\sup\{\varphi(x) : x\in K\}$ (cf. Lemma~\ref{lem:inf}). If $M: \Sigma\to ck(E)$ is a set-valued map defined on a $\sigma$-algebra~$\Sigma$
and $\varphi\in E'$, then $s(\varphi,M): \Sigma \to \mathbb{R}$ is the map 
defined by $s(\varphi,M)(A):=s(\varphi,M(A))$ for all $A\in \Sigma$.
We write ${\rm ext}(C)$ to denote the set of all extreme points of a convex non-empty subset~$C$ of a linear space.

\begin{theo}\label{theo:ca-selector}
Let $E$ be a locally convex Hausdorff space and let $M:\Sigma \to ck(E)$ be a set-valued map defined on a $\sigma$-algebra~$\Sigma$
such that $s(\varphi,M)$ is countably additive for all $\varphi\in E'$.
Then there is a countably additive map $\nu:\Sigma \to E$ such that $\nu(A)\in {\rm ext}(M(A))$ for every $A\in \Sigma$.
\end{theo}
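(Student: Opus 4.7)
The plan is to build $\nu$ in two stages: first produce any countably additive selector of $M$, then shrink it via a Zorn argument to one valued in the extreme points. Throughout I would rely on the Minkowski-additivity principle from Step~1 of the proof of Theorem~\ref{theo:OT-2}, which transfers to the present locally convex setting verbatim: the finite additivity of $s(\varphi,M)$ for every $\varphi\in E'$, convexity and compactness of each $M(A)$, and Hahn-Banach force $M(A\sqcup B)=M(A)+M(B)$ for disjoint $A,B\in\Sigma$, with sums of two convex compact sets remaining convex compact.

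For stage one, I would construct a countably additive $\nu_0:\Sigma\to E$ with $\nu_0(A)\in M(A)$ for every~$A$. The finitely additive selectors form a non-empty closed subset of the compact product $\prod_{A\in\Sigma}M(A)$ (non-emptiness via Hahn-Banach on finite partitions of~$\Omega$ combined with Tychonoff's theorem). Fix such a $\nu_0$. For every $\varphi\in E'$, Lemma~\ref{lem:inf} sandwiches $\varphi\circ\nu_0$ between the countably additive measures $-s(-\varphi,M)$ and $s(\varphi,M)$, so $\varphi\circ\nu_0$ is itself countably additive. Because $\nu_0$ takes values in the compact set $M(\Omega)$, on which the original topology of $E$ and $\sigma(E,E')$ coincide, the partial sums along any disjoint decomposition of $\Omega$ converge weakly and therefore in the topology of~$E$; hence $\nu_0$ is countably additive.

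For stage two, I would apply Zorn's lemma to
\[
\mathcal{F}:=\{N:\Sigma\to ck(E):\ N(A)\subseteq M(A),\ s(\varphi,N)\text{ countably additive }\forall\,\varphi\in E'\},
\]
ordered by pointwise inclusion. Chains admit lower bounds via the pointwise intersection $N(A):=\bigcap_\alpha N_\alpha(A)$, which is non-empty convex compact by the finite intersection property and satisfies $s(\varphi,N)=\inf_\alpha s(\varphi,N_\alpha)$; a dominated-convergence argument using the uniform bound $|s(\varphi,N_\alpha)|\le|s(\varphi,M)|+|s(-\varphi,M)|$ yields countable additivity of $s(\varphi,N)$. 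Let $\tilde M$ be a minimal element. If some $\tilde M(A_0)$ were to contain two distinct points, separating them by $\varphi\in E'$ and passing to the exposed face
\[
\tilde M'(A):=\{x\in\tilde M(A):\ \varphi(x)=s(\varphi,\tilde M(A))\}
\]
would produce a strictly smaller member of $\mathcal{F}$, contradicting minimality; the same face-extraction also rules out non-extreme points. Hence each $\tilde M(A)$ is a singleton contained in $\mathrm{ext}(M(A))$, and $\nu(A)$ is defined as that point.

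The principal obstacle is the preservation of countable additivity of \emph{all} support functionals under the two operations involved: chain intersection and exposed-face restriction. The first is handled by dominated convergence on the scalar chain, though some care is needed to pass to a cofinal sequence when the chain is uncountable. The second rests on the Minkowski identity $\tilde M'(A\sqcup B)=\tilde M'(A)+\tilde M'(B)$, extracted from the equality case in $\varphi(y)+\varphi(z)\le s(\varphi,\tilde M(A))+s(\varphi,\tilde M(B))$, and then iterated over countable partitions using the strong-multimeasure character of $\tilde M$ granted by the locally convex analogue of Corollary~\ref{cor:strong-multimeasure-compact} applied in $(E,\sigma(E,E'))$.
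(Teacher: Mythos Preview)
Your approach is genuinely different from the paper's. The paper never uses Zorn's lemma or Tychonoff: it fixes one extreme point $x\in{\rm ext}(M(\Omega))$ and, via the elementary decomposition Lemma~\ref{lem:extreme1}, writes $x=\nu(A)+\nu(\Omega\setminus A)$ with $\nu(A)\in{\rm ext}(M(A))$ uniquely; finite additivity of~$\nu$ then follows from uniqueness, and countable additivity from the domination $|\varphi\circ\nu|\le|s(\varphi,M)|+|s(-\varphi,M)|$ together with the locally convex Orlicz--Pettis theorem. This is shorter and entirely constructive, while your route would in principle yield the same conclusion through a maximality argument.

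However, your Stage~2 has a genuine gap. Minimal elements of your family $\mathcal{F}$ are indeed singleton-valued, but nothing forces those singletons to be \emph{extreme} in~$M(A)$. Take $\Sigma=\{\emptyset,\Omega\}$ and $M(\Omega)=[0,1]$: the constant map $\tilde M(\Omega)=\{1/2\}$ is minimal in~$\mathcal{F}$, yet $1/2\notin{\rm ext}([0,1])$. Your sentence ``the same face-extraction also rules out non-extreme points'' does not work, because once $\tilde M$ is singleton-valued the exposed-face operation returns $\tilde M$ itself. The fix is to restrict $\mathcal{F}$ from the start to those $N$ with every $N(A)$ a \emph{face} of~$M(A)$; intersections of chains of faces are faces, exposed faces of faces are faces, and a one-point face is precisely an extreme point.

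There is also a smaller slip in Stage~1: the assertion that $\nu_0$ takes values in $M(\Omega)$ is false in general, since $M(A)\not\subseteq M(\Omega)$ unless $0\in M(\Omega\setminus A)$. The honest way to upgrade weak countable additivity of $\nu_0$ to countable additivity is exactly the locally convex Orlicz--Pettis theorem that the paper invokes. Incidentally, once Stage~2 is repaired as above, Stage~1 becomes redundant: Zorn's lemma applied to the face-valued family already produces the countably additive extreme selector directly.
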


Lemma~\ref{lem:fa-selector} below (cf. \cite[Theorem~8.6]{dre}) is a key step towards the previous theorem. First, we need an elementary observation.

\begin{lem}\label{lem:extreme1}
Let $E$ be a linear space, let $C_1$ and $C_2$ be convex non-empty subsets of~$E$ and let $x\in {\rm ext}(C_1+C_2)$. Then there exist unique $x_1\in C_1$
and $x_2\in C_2$ such that $x=x_1+x_2$. Moreover, $x_1\in {\rm ext}(C_1)$ and $x_2\in {\rm ext}(C_2)$.
\end{lem}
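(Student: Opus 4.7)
The plan is a straightforward application of the midpoint characterisation of extreme points in convex sets (note that $C_1+C_2$ is convex because $C_1$ and $C_2$ are). Existence of the decomposition $x=x_1+x_2$ is immediate from the very definition of $C_1+C_2$, so the substantive content lies in verifying (a)~extremality of each piece and (b)~uniqueness of the decomposition. Both parts follow the same idea: I exhibit an alternative midpoint representation of $x$ inside $C_1+C_2$ and invoke the extremality of $x$ to collapse it.

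For the extremality claim, I would fix any decomposition $x=x_1+x_2$ with $x_i\in C_i$ and suppose $x_1=\tfrac{1}{2}(u+v)$ for some $u,v\in C_1$. Adding $x_2$ to both sides yields
$$
x = \tfrac{1}{2}\bigl((u+x_2)+(v+x_2)\bigr),
$$
with both $u+x_2$ and $v+x_2$ in $C_1+C_2$. Since $x\in\mathrm{ext}(C_1+C_2)$, this forces $u+x_2=v+x_2=x$, hence $u=v=x_1$. Thus $x_1\in\mathrm{ext}(C_1)$, and a symmetric argument handles $x_2$.

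For uniqueness, suppose $x=x_1+x_2=y_1+y_2$ with $x_i,y_i\in C_i$. The key move is to cross the summands: both $x_1+y_2$ and $y_1+x_2$ lie in $C_1+C_2$, and
$$
x = \tfrac{1}{2}\bigl((x_1+y_2)+(y_1+x_2)\bigr).
$$
Extremality of $x$ once more forces $x_1+y_2=y_1+x_2=x$; comparing with $x_1+x_2=y_1+y_2=x$ yields $x_1=y_1$ and $x_2=y_2$.

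I do not anticipate any substantial obstacle: the whole argument is a double use of the midpoint definition of extreme points, first with one summand held fixed, then with the two decompositions crossed. The only real subtlety is spotting the crossing trick $(x_1+y_2,y_1+x_2)$ for uniqueness, since the more naive averaging $\tfrac{1}{2}(x_1+y_1)+\tfrac{1}{2}(x_2+y_2)$ simply reproduces $x$ as another sum in $C_1+C_2$ without pinning down the two coordinates separately.
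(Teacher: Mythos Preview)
Your proposal is correct and follows essentially the same argument as the paper's proof: both use the crossing trick $x=\tfrac{1}{2}(x_1+y_2)+\tfrac{1}{2}(y_1+x_2)$ for uniqueness and the add-the-fixed-summand trick $x=\tfrac{1}{2}(u+x_2)+\tfrac{1}{2}(v+x_2)$ for extremality. The only cosmetic difference is that the paper establishes uniqueness first and extremality second, whereas you reverse the order.
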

\begin{proof}
Take $x_1,x'_1\in C_1$ and $x_2,x'_2\in C_2$ such that $x=x_1+x_2=x'_1+x'_2$.  Then
$$
	x=\frac{1}{2}(x_1+x'_2)+\frac{1}{2}(x'_1+x_2)
$$
with $x_1+x'_2\in C_1+C_2$ and $x'_1+x_2\in C_1+C_2$. Since $x \in {\rm ext}(C_1+C_2)$, we get
$$
	x_1+x'_2=x'_1+x_2=x=x_1+x_2=x'_1+x'_2,
$$
so $x_1=x'_1$ and $x_2=x'_2$, as required. 

To check that $x_1 \in {\rm ext}(C_1)$, take $u,v\in C_1$ such that $x_1=\frac{1}{2}(u+v)$. Then
$$
	x=\frac{1}{2}(u+x_2)+\frac{1}{2}(v+x_2)
$$
with $u+x_2 \in C_1+C_2$ and $v+x_2\in C_1+C_2$. Since $x \in {\rm ext}(C_1+C_2)$, we have
$$
	u+x_2=v+x_2
$$
and therefore $u=v$. It follows that $x_1\in {\rm ext}(C_1)$. Analogously, $x_2\in {\rm ext}(C_2)$.
\end{proof}

\begin{lem}\label{lem:fa-selector}
Let $E$ be a locally convex Hausdorff space, let $\mathcal{A}$ be an algebra of subsets of a set~$\Omega$ and let $M:\mathcal{A} \to ck(E)$
be a finitely additive set-valued map, that is, $M(A\cup B)=M(A)+M(B)$ whenever $A,B\in \mathcal{A}$ are disjoint.
Then there is a finitely additive map $\nu:\mathcal{A} \to E$ such that $\nu(A)\in {\rm ext}(M(A))$ for every $A\in \mathcal{A}$.
\end{lem}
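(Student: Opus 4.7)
The plan is to anchor the selector at a single extreme point of $M(\Omega)$ and then use Lemma~\ref{lem:extreme1} to read off, for each $A\in\mathcal{A}$, the unique ``$M(A)$-coordinate'' of that anchor relative to the decomposition $M(\Omega)=M(A)+M(\Omega\setminus A)$. Concretely, I would first invoke the Krein--Milman theorem (applicable because $M(\Omega)\in ck(E)$ is convex, compact and non-empty in a locally convex Hausdorff space) to pick some $x_0\in {\rm ext}(M(\Omega))$. For each $A\in\mathcal{A}$, finite additivity gives $M(\Omega)=M(A)+M(\Omega\setminus A)$, so Lemma~\ref{lem:extreme1} yields unique $y_A\in M(A)$ and $z_A\in M(\Omega\setminus A)$ with $x_0=y_A+z_A$, and moreover $y_A\in {\rm ext}(M(A))$ and $z_A\in {\rm ext}(M(\Omega\setminus A))$. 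I would define $\nu(A):=y_A$; by construction $\nu(A)\in {\rm ext}(M(A))$.

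The remaining task is finite additivity of $\nu$. Given disjoint $A,B\in\mathcal{A}$, write $C:=A\cup B$. Applying Lemma~\ref{lem:extreme1} to $x_0\in {\rm ext}(M(C)+M(\Omega\setminus C))$ produces the unique decomposition $x_0=\nu(C)+w$ with $\nu(C)\in {\rm ext}(M(C))$. Since $M(C)=M(A)+M(B)$, a second application of Lemma~\ref{lem:extreme1} to $\nu(C)\in {\rm ext}(M(A)+M(B))$ splits $\nu(C)=u_A+u_B$ with $u_A\in {\rm ext}(M(A))$ and $u_B\in {\rm ext}(M(B))$. Hence $x_0=u_A+(u_B+w)$ is a decomposition into $M(A)+M(\Omega\setminus A)$ (using $M(\Omega\setminus A)=M(B)+M(\Omega\setminus C)$). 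The uniqueness clause of Lemma~\ref{lem:extreme1} applied to the splitting $M(\Omega)=M(A)+M(\Omega\setminus A)$ forces $u_A=\nu(A)$; by the symmetric argument $u_B=\nu(B)$, so $\nu(C)=\nu(A)+\nu(B)$.

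For completeness, one checks $\nu(\emptyset)=0$. The identity $M(\emptyset)=M(\emptyset)+M(\emptyset)$ combined with convexity gives $M(\emptyset)=2M(\emptyset)=2^{-n}M(\emptyset)$ for all $n$; since $M(\emptyset)$ is compact (hence bounded) and $E$ is Hausdorff, this collapses to $M(\emptyset)=\{0\}$, whence $\nu(\emptyset)=0$.

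The only real subtlety is that the ``$M(A)$-coordinate'' of $x_0$ be \emph{consistent} across all the different finite partitions of $\Omega$ in which $A$ participates; this is exactly what the uniqueness part of Lemma~\ref{lem:extreme1} delivers once we commit to a single anchor $x_0\in {\rm ext}(M(\Omega))$. The rest is bookkeeping: there is no Zorn's lemma or transfinite construction needed, since the algebra structure is already encoded in the single decomposition $M(\Omega)=M(A)+M(\Omega\setminus A)$. (Note that passing to a $\sigma$-algebra and countable additivity of $\nu$---the statement of Theorem~\ref{theo:ca-selector}---will require additional work, typically either a separate argument using countable additivity of the support maps $s(\varphi,M)$ or an extension/continuity argument; that is a different problem from the finitely additive selector constructed here.)
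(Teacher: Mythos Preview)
Your argument is correct and follows essentially the same route as the paper: anchor at an extreme point $x_0\in{\rm ext}(M(\Omega))$ via Krein--Milman, define $\nu(A)$ as the unique $M(A)$-component of $x_0$ in the splitting $M(\Omega)=M(A)+M(\Omega\setminus A)$ given by Lemma~\ref{lem:extreme1}, and verify finite additivity by regrouping a three-term decomposition and invoking uniqueness. The only cosmetic difference is that the paper decomposes $\nu(A\cup B)\in M(A)+M(B)$ arbitrarily (rather than via a second application of Lemma~\ref{lem:extreme1}) before appealing to uniqueness, and it omits your extra check that $M(\emptyset)=\{0\}$, which is in any case redundant once finite additivity is established.
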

\begin{proof}
Since $M(\Omega)$ is convex, compact and non-empty, 
the set ${\rm ext}(M(\Omega))$ is non-empty, by the Krein-Milman theorem (see, e.g., \cite[Theorem~3.65]{fab-ultimo}). Let us fix $x\in {\rm ext} (M(\Omega))$.
For each $A\in \mathcal{A}$ we have $M(\Omega)=M(A)+M(\Omega \setminus \mathcal{A})$
and so there exist $\nu(A) \in {\rm ext}(M(A))$ and $\nu(\Omega \setminus A) \in {\rm ext}(M(\Omega\setminus A))$
such that
$$
	x=\nu(A)+\nu(\Omega \setminus A)
$$
(by Lemma~\ref{lem:extreme1}).

To check that the map $\nu:\mathcal{A}\to E$ is finitely additive, take disjoint $A_1,A_2\in \mathcal{A}$. Then
$$
	\nu(A_1\cup A_2)\in M(A_1\cup A_2)=M(A_1)+M(A_2)
$$
and so we can write $\nu(A_1\cup A_2)=x_1+x_2$ for some $x_1\in M(A_1)$ and $x_2\in M(A_2)$. Observe that
by the definition of~$\nu$ we have
$$
	x=\nu(A_1\cup A_2) +\nu(\Omega \setminus (A_1\cup A_2))=x_1+(x_2+\nu(\Omega \setminus (A_1\cup A_2))),
$$
where $x_1\in M(A_1)$ and
$$
	x_2+\nu(\Omega \setminus (A_1\cup A_2))\in M(A_2)+M(\Omega \setminus (A_1\cup A_2))=M(\Omega \setminus A_1).
$$
By the uniqueness part of Lemma~\ref{lem:extreme1}, $x_1=\nu(A_1)$.
A similar argument yields $x_2=\nu(A_2)$. Hence, $\nu(A_1\cup A_2)=\nu(A_1)+\nu(A_2)$, as required.
\end{proof}

\begin{proof}[Proof of Theorem~\ref{theo:ca-selector}]
Note first that $M$ is finitely additive. Indeed, let $A,B \in \Sigma$ be disjoint. Then for each $\varphi\in E'$
the finite additivity of $s(\varphi,M)$ yields
$$
	s(\varphi,M(A\cup B))=s(\varphi,M(A))+s(\varphi,M(B))=s(\varphi,M(A)+M(B)).
$$
Since both $M(A\cup B)$ and $M(A)+M(B)$ are convex and closed, the Hahn-Banach theorem ensures that
$M(A\cup B)=M(A)+M(B)$, as required.

By Lemma~\ref{lem:fa-selector},
there is a finitely additive map $\nu:\Sigma \to E$ in such a way that $\nu(A)\in {\rm ext}(M(A))$ for every $A\in \Sigma$.

Fix $\varphi\in E'$. We claim that the composition $\varphi \circ \nu:\Sigma \to \mathbb{R}$ is countably additive. Indeed,
just observe that $\varphi\circ \nu$ is finitely additive, we have
$$
	|\varphi(\nu(A))| \leq |s(\varphi ,M(A))|+|s(-\varphi,M(A))|
	\quad
	\text{for all $A\in \Sigma$}
$$
(by Lemma~\ref{lem:inf}) and both $s(\varphi,M)$ and $s(-\varphi,M)$ are countably additive. 

By the Orlicz-Pettis theorem in the setting of locally convex spaces (see, e.g., \cite[p.~308]{jar}),
$\nu$ is countably additive. The proof is finished.
\end{proof}

\subsection*{Acknowledgements}
The research was supported by grants PID2021-122126NB-C32 
(funded by MCIN/AEI/10.13039/501100011033 and ``ERDF A way of making Europe'', EU) and 
21955/PI/22 (funded by {\em Fundaci\'on S\'eneca - ACyT Regi\'{o}n de Murcia}).


\begin{thebibliography}{10}

\bibitem{avi-mar-rod}
A.~Avil\'{e}s, G.~Mart\'{\i}nez-Cervantes, and J.~Rodr\'{\i}guez,
  \emph{Weak*-sequential properties of {J}ohnson-{L}indenstrauss spaces}, J.
  Funct. Anal. \textbf{276} (2019), no.~10, 3051--3066.

\bibitem{bon-cas}
J.~Bonet and B.~Cascales, \emph{Noncomplete {M}ackey topologies on {B}anach
  spaces}, Bull. Aust. Math. Soc. \textbf{81} (2010), no.~3, 409--413.

\bibitem{cas-kad-rod-2}
B.~Cascales, V.~Kadets, and J.~Rodr{\'{\i}}guez, \emph{Measurable selectors and
  set-valued {P}ettis integral in non-separable {B}anach spaces}, J. Funct.
  Anal. \textbf{256} (2009), no.~3, 673--699. 

\bibitem{cas-rod-2}
B.~Cascales and J.~Rodr{\'{\i}}guez, \emph{Birkhoff integral for multi-valued
  functions}, J. Math. Anal. Appl. \textbf{297} (2004), no.~2, 540--560.

\bibitem{die-fai}
J.~Diestel and B.~Faires, \emph{On vector measures}, Trans. Amer. Math. Soc.
  \textbf{198} (1974), 253--271. 

\bibitem{die-uhl-J}
J.~Diestel and J.~J. Uhl, Jr., \emph{Vector measures}, Mathematical Surveys, No. 15, American Mathematical
  Society, Providence, R.I., 1977.
   

\bibitem{dre}
L.~Drewnowski, \emph{Additive and countably additive correspondences}, Comment.
  Math. Prace Mat. \textbf{19} (1976), no.~1, 25--54. 

\bibitem{fab-ultimo}
M.~Fabian, P.~Habala, P.~H{\'a}jek, V.~Montesinos, and V.~Zizler, \emph{Banach
  space theory. The basis for linear and nonlinear analysis}, CMS Books in Mathematics/Ouvrages de Math\'ematiques de la
  SMC, Springer, New York, 2011.

\bibitem{fer-alt-4}
A.~Fern{\'a}ndez, F.~Mayoral, F.~Naranjo, and J.~Rodr{\'{\i}}guez,
  \emph{Norming sets and integration with respect to vector measures}, Indag.
  Math. (N.S.) \textbf{19} (2008), no.~2, 203--215. 

\bibitem{flo}
K.~Floret, \emph{Weakly compact sets}, Lecture Notes in Mathematics, vol. 801,
  Springer, Berlin, 1980.

\bibitem{gui-mar-rod}
A.~J. Guirao, G.~Mart\'{\i}nez-Cervantes, and J.~Rodr\'{\i}guez,
  \emph{Completeness in the {M}ackey topology by norming subspaces}, J. Math.
  Anal. Appl. \textbf{478} (2019), no.~2, 776--789.

\bibitem{gui-mon-ziz}
A.~J. Guirao, V.~Montesinos, and V.~Zizler, \emph{A note on {M}ackey topologies
  on {B}anach spaces}, J. Math. Anal. Appl. \textbf{445} (2017), no.~1,
  944--952.

\bibitem{hes-J}
C.~Hess, \emph{Set-valued integration and set-valued probability theory: an
  overview}, Handbook of measure theory, Vol. I, II, North-Holland, Amsterdam,
  2002, pp.~617--673.

\bibitem{jar}
H.~Jarchow, \emph{Locally convex spaces}, B. G. Teubner, Stuttgart, 1981,
  Mathematische Leitf\"aden. [Mathematical Textbooks]. 

\bibitem{kan}
D.~A. Kandilakis, \emph{On the extension of multimeasures and integration with
  respect to a multimeasure}, Proc. Amer. Math. Soc. \textbf{116} (1992),
  no.~1, 85--92. 

\bibitem{mus13}
K.~Musia{\l}, \emph{Multimeasures with values in conjugate {B}anach spaces and
  the weak {R}adon-{N}ikod\'{y}m property}, J. Convex Anal. \textbf{28} (2021),
  no.~3, 879--902.

\bibitem{nyg-rod}
O.~Nygaard and J.~Rodr\'{\i}guez, \emph{Isometric factorization of vector
  measures and applications to spaces of integrable functions}, J. Math. Anal.
  Appl. \textbf{508} (2022), no.~1, paper no.~125857.

\bibitem{oka-rod-san}
S.~Okada, J.~Rodr\'{\i}guez, and E.~A. S\'{a}nchez-P\'{e}rez, \emph{On vector
  measures with values in {$\ell_\infty $}}, Studia Math. \textbf{274} (2024),
  no.~2, 173--199.

\bibitem{pfi-J}
H.~Pfitzner, \emph{Boundaries for {B}anach spaces determine weak compactness},
  Invent. Math. \textbf{182} (2010), no.~3, 585--604. 

\bibitem{pli3}
A.~Plichko, \emph{Three sequential properties of dual {B}anach spaces in the
  weak{$\sp *$} topology}, Topology Appl. \textbf{190} (2015), 93--98.

\bibitem{pli-yos-2}
A.~M. Plichko and D.~Yost, \emph{Complemented and uncomplemented subspaces of
  {B}anach spaces}, Extracta Math. \textbf{15} (2000), no.~2, 335--371.

\bibitem{rod15}
J.~Rodr\'{\i}guez, \emph{Factorization of vector measures and their integration
  operators}, Colloq. Math. \textbf{144} (2016), no.~1, 115--125. 

\bibitem{tho}
G.~E.~F. Thomas, \emph{L'int\'egration par rapport \`a une mesure de {R}adon
  vectorielle}, Ann. Inst. Fourier (Grenoble) \textbf{20} (1970), no.~2, 55--191. 

\end{thebibliography}

\bibliographystyle{amsplain}

\end{document}